\documentclass[twoside]{scrartcl}
\usepackage[nouppercase,headsepline]{scrpage2}

\usepackage{tikz-cd}
\usetikzlibrary{decorations.pathmorphing}
\usepackage{mathtools}  
\usepackage{amsmath}
\usepackage{amssymb}
\usepackage{amsthm}
\usepackage{amsxtra}

\usepackage[english]{hyperref}
\usepackage[english]{babel}
\usepackage[utf8]{inputenc}

\usepackage{url}
\usepackage[numbers]{natbib}


\newcommand*{\Q}{\mathbb{Q}}
\newcommand*{\R}{\mathbb{R}}
\newcommand*{\C}{\mathbb{C}}
\newcommand*{\Z}{\mathbb{Z}}

\newcommand*{\fieldk}{\mathbf{k}}

\newcommand*{\Dltc}{D}

\newcommand*{\Disc}{D_{\fieldk}} 
\newcommand*{\order}[1]{\mathcal{O}{\left(#1\right)}}
\newcommand*{\OK}{\mathcal{O}_{\fieldk}}
\newcommand*{\Diffk}{\mathfrak{d}_\fieldk}
\newcommand*{\sqrtD}{\delta_{\fieldk}}


\newcommand*{\group}[1]{\mathrm{#1}}

\newcommand*{\SU}{\group{SU}}
\newcommand*{\Ug}{\group{U}}
\newcommand*{\Og}{\group{O}}

\newcommand*{\SL}{\group{SL}}

\newcommand*{\Mp}{\group{Mp}}

\newcommand*{\HeisU}[1]{\mathrm{Heis}_{#1}}

\newcommand*{\Vk}{V_\fieldk}

\newcommand*{\HU}{\mathcal{H}}
\newcommand*{\DomU}{\mathbb{D}} 
\newcommand*{\Hp}{\mathbb{H}}

\newcommand*{\coneU}{\mathcal{C}}

\newcommand*{\projV}{\pi_V} 

\DeclarePairedDelimiter{\hlfa}{\langle}{\rangle}
\newcommand*{\hlf}[2]{\hlfa*{ #1, #2}} 
\newcommand{\hlfempty}{\hlf{\cdot}{\cdot}}
\DeclarePairedDelimiter{\blfp}{(}{)}
\newcommand*{\blf}[2]{\blfp*{ #1, #2}} 
\newcommand*{\blfempty}{\blf{\cdot}{\cdot}} 

\newcommand*{\Qf}[1]{q\!\left( #1 \right)} 
\newcommand*{\QfNop}{q} 
\DeclarePairedDelimiter{\abs}{\lvert}{\rvert}
\newcommand*{\Iso}{\mathrm{Iso}}

\DeclareMathOperator{\tr}{Tr}

\newcommand*{\Ueps}{U_{\epsilon}}
\newcommand*{\Oeps}{\mathcal{O}_\epsilon}
\newcommand*{\Peps}{\mathcal{P}_\epsilon}
\newcommand*{\Pic}{\mathrm{Pic}}

\newcommand*{\KH}{\mathrm{H}}
\newcommand*{\Htwo}{\KH^2} 
\newcommand*{\CoC}{\mathrm{C}}
\newcommand*{\Hom}{\operatorname{Hom}}

\newcommand*{\BIL}{\mathsf{BIL}}
\newcommand*{\BilZ}{\BIL_\Z}

\newcommand*{\trg}{\operatorname{tg}}
\newcommand*{\Lcal}{\mathcal{L}}
\newcommand*{\Dg}{D_{\ell, \Gamma}}
\newcommand*{\Ng}{N_{\ell,\Gamma}}


\newcommand*{\HeegU}{\mathbf{H}}

\newcommand*{\ebase}{\mathfrak{e}} 
\newcommand*{\Cuspf}{\mathcal{S}}

\newtheorem{theorem}{Theorem}[section]
\newtheorem{definition}{Definition}[section]
\newtheorem{proposition}{Proposition}[section]
\newtheorem{lemma}{Lemma}[section]

\newtheorem{rmk}{Remark}[section]
\newtheorem*{rmk*}{Remark}

\newtheorem{corollary}{Corollary}[section]
\newtheorem*{assumption*}{Assumption}

\newcommand*{\acknowledegements}{\textit{Acknowledegements:}\ }

\pagestyle{scrheadings}
\clearscrheadfoot
\lohead{}\cohead{}\rohead{Local Borcherds Products for Unitary Groups} 
\lehead{}\cehead{}\rehead{Eric Hofmann}
\cfoot{\pagemark}

\title{Local Borcherds Products for Unitary Groups}
\author{Eric Hofmann
}\date{}
\KOMAoptions{abstract=on}
\begin{document}
\maketitle
\begin{abstract} 
For the modular variety attached to an arithmetic subgroup  of an indefinite unitary group of signature $(1,n+1)$, with $n\geq 1$, 
we study Heegner divisors in the local Picard group over a boundary component of a compactification. 
For this purpose, we introduce local Borcherds products. 
We obtain a precise criterion for local Heegner divisors to be torsion elements in the Picard group, 
and further, as an application, we show that the obstructions to a local Heegner divisor being a torsion element
can be described by certain spaces of vector valued elliptic cusp forms, transforming under a Weil-representation. 
\footnote{\textit{Date:} \today \\%
2010 \textit{Mathematics Subject  Classification:} 11F27,  11F55, 11G18, 11G35.  \\
\textit{Keywords:} Local Borcherds product, unitary modular variety, Heegner divisor, local Picard group. }
\end{abstract}
\setcounter{section}{-1}
\section{Introduction and statement of results}
A  \emph{local Borcherds}  product is a holomorphic function, which, like a Borcherds form has an absolutely convergent infinite product expansion
and an arithmetically defined divisor, called a local Heegner divisor.  Here, \lq local\rq\ refers to boundary components of a modular variety. 
Such products were first introduced by Bruinier and Freitag, who, in \cite{BrFr} studied the local divisor class groups of generic boundary components for the modular varieties  of indefinite orthogonal groups $\Og(2,l)$, $l\geq 3$.
Since then, local Borcherds products have appeared in several places in the literature,
for example in \cite{B123}, for the Hilbert modular group, and  in \cite{FS}, 
where they are introduced to study a specific problem in the geometry of Siegel three folds.

The aim of the present paper is to develop a theory similar to that of Bruinier and Freitag
for unitary groups of signature $(1, n+1)$, $n\geq 1$. 

Let $\fieldk = \Q(\sqrt{\Disc})$ be an imaginary quadratic number field with discriminant $\Disc$, 
which we consider as a subset of $\C$.
Denote by $\OK$ the ring of integers in $\fieldk$, by $\Diffk^{-1}$ the inverse different ideal 
and by $\sqrtD$ the square-root of $\Disc$, with the principal branch of the complex square-root.

Let $V$ be an indefinite hermitian vector space over $\fieldk$ of signature $(1, n+1)$,
equipped with a non-degenerate hermitian form $\hlfempty$. 
Let $L$ be a lattice in $V$, of full rank as an $\OK$-module,  
so that $L\otimes_{\OK} \fieldk = V$. We assume that $L$ is an even and integral lattice,
hence $\hlf{\lambda}{\lambda} \in \Z$ for all $\lambda \in L$. 
In this introductory section only, we additionally assume that $L$ is unimodular over $\Z$, 
i.e.\  $L = L' = \left\{ \mu \in V;\; \hlf{\lambda}{\mu}\in\Diffk^{-1},\, \forall \lambda \in L \right\}$. 

We denote by $\Ug(V)$ the unitary group of $V$ and by $\Ug(L) \subset \Ug(V)$ 
the isometry group of $L$.  Subgroups of finite index in $\Ug(L)$ are called unitary modular groups.

We consider $\Ug(V)$ as an algebraic group defined over $\Q$. Its set of real points, denoted $\Ug(V)(\R)$, is the unitary group of the complex hermitian space 
$V \otimes_\fieldk \C$. A symmetric domain for the operation of this group is given by the quotient 
\[ 
\DomU = \Ug(V)(\R) / \mathcal{K},
\]
where $\mathcal{K}$ is a maximal compact subgroup of $\Ug(V)(\R)$.
If $\Gamma \subset \Ug(L)$ is a unitary modular group, we denote by $X_\Gamma$ the modular variety given by the quotient $\Gamma \backslash \DomU$. 
Note that $X_\Gamma$ is non-compact. 

The boundary points of $\DomU$ correspond one-to-one 
to the elements $I$ of the set of rational one-dimensional  isotropic subspaces of $V$, denoted $\Iso(V)$. 
For every cusp of  $X_{\Gamma}$ one can thus introduce a small open neighborhood $U_\epsilon(I)$. 
These neighborhoods are then glued to $X_\Gamma$, furnishing a compactification. 
We describe this procedure in section \ref{subsec:cmpct} both for the Baily-Borel compactification, in which singularities remain at the cusps, 
and for a toroidal compactification, which turns $X_\Gamma$ into a normal complex space without singularities at the cusps. 

We will study the  Picard groups of such (suitably small) open neighborhoods $U_\epsilon(I)$. 
Since the construction we carry out is local in nature, it suffices to examine only one fixed cusp.
For this purpose, we choose a primitive isotropic lattice vector $\ell \in L$. Fixing a vector $\ell' \in L$ with $\hlf{\ell}{\ell'} \neq 0$,  denote 
by $\Dltc$ the definite lattice $L\cap \ell^\perp \cap \ell'^\perp$. 
The stabilizer $\operatorname{Stab}_\Gamma(\ell)$ of $\ell$ in $\Gamma$ contains a Heisenberg group, denoted $\Gamma_\ell$. 
This group has finite index in the stabilizer. Its elements can be written as pairs $[h,t]$, with $h$ a rational number and $t$ a lattice vector. The set of all such $t$'s constitutes a sub-lattice $\Dg \subseteq \Dltc$.

Following \cite{BrFr}, we define the Picard group $\Pic(X_\Gamma, \ell)$ as the direct limit $\varinjlim\Pic(\Ueps^{reg}(\ell))$, 
where $\Ueps^{reg}(\ell)$ is the regular locus of $\Ueps(\ell)$ in the Baily-Borel compactification. 

Up to torsion, this local Picard group can also be described by the direct limit 
$\varinjlim \Pic(\Gamma_\ell \backslash \Ueps(\ell))$, see p.\ \pageref{sec:heegner} for details.
Thus, if we only want to describe the position of certain special divisors in  $\Pic(X_\Gamma, \ell)$ up to torsion,
we can work in  $\Pic(\Gamma_\ell \backslash U_\epsilon(\ell))$, with a sufficiently small $\epsilon >0$.

For a lattice vector  $\lambda \in L$ of negative norm, i.e.\  $\hlf{\lambda}{\lambda} \in \Z_{< 0}$, 
a primitive Heegner divisor $\HeegU(\lambda)$ is defined by the orthogonal  complement $\lambda^\perp$ with
respect to $\hlfempty$ of $\lambda$ in $\DomU$. 
If $\ell$ lies in $\lambda^\perp$, we attach a local Heegner divisor to $\lambda$ by setting 
$\HeegU_\infty(\lambda) \vcentcolon= \sum_{ \alpha \in \Diffk^{-1}} \HeegU(\lambda+ \alpha\ell)$.

A Heegner divisor of $\DomU$ is a $\Gamma$-invariant finite linear combination 
of primitive Heegner divisors and the pre-image under the canonical projection of a divisor on $X_\Gamma$.  
By a \emph{local Heegner divisor}, we mean a finite linear combination of local Heegner divisors of the form $\HeegU_\infty(\lambda)$,
which corresponds to the pre-image of an element of the divisor group 
$\mathrm{Div}( \Gamma_\ell \backslash U_\epsilon(\ell))$, see section \ref{subsec:heegner} for details.

We want to describe the position of local Heegner divisors in the local Picard group $\Pic(X_\Gamma, \ell)$ (up to torsion)
through their position in $\Pic(\Gamma_\ell\backslash U_\epsilon(\ell))$.
This is where local Borcherds products come into play: 
For a negative-norm lattice vector $\lambda$ we define the local Borcherds product $\Psi_\lambda(z)$
as follows (see section \ref{subsec:localbp}):
\[
\Psi_\lambda(z) \vcentcolon= \prod_{\alpha \in \Diffk^{-1}}
\left( 1 - e\left(\sigma(\alpha) \hlf{z}{\lambda - \alpha\ell} \right)  \right).
\]
Here, $\lambda - \alpha\ell$ runs over finitely many orbits under the operation of $\Gamma_\ell$, and 
$\sigma(\mu)$ is a sign introduced to assure absolute convergence.
The product has divisor  $\HeegU_\infty(\lambda)$. However, because 
of the sign $\sigma(\alpha)$, it is not invariant under $\Gamma_\ell$. 
Instead, there is a non-trivial automorphy factor. 

This is actually a desirable situation: By calculating the automorphy factor, we are able to determine 
the Chern class of $\HeegU_\infty(\lambda)$ in the cohomology group $\Htwo(\Gamma_\ell, \Z)$ (see
sections \ref{subsec:localbp} and \ref{subsec:chern}). 
It turns out to be given by the image $[c_\lambda]$ of a bilinear form in the cohomology: 
\begin{equation}\label{eq:chern_intro}
\begin{gathered}
c_\lambda([h,t], [h', t']) = -  \Im\left[  \abs{\sqrtD}  F_\lambda(t,t') \right] \quad(\text{for}\quad [h,t],[h',t'] \in \Gamma_\ell), \\
\text{where} \quad F_\lambda(x,y) \vcentcolon= \hlf{x}{\lambda}\hlf{y}{\lambda} + \hlf{\lambda}{x}\hlf{y}{\lambda} \quad(x,y\in \Dltc\otimes_{\OK}\C).
\end{gathered}
\end{equation}
Through this, we know the Chern class of every local Heegner divisor 
as a finite linear combination, and can thus describe its position in the cohomology.

For this we use results prepared in section \ref{sec:cohom}, from calculations in the group cohomology for $\Gamma_\ell$,  
concerning the properties of cocycles in $\Htwo(\Gamma_\ell, \Z)$. 
We obtain an equivalent condition for the Chern class of a linear combination of Heegner divisors to be a torsion element, 
in Lemma \ref{lemma:tors_lcH}. From the proof, we also obtain a further, necessary condition, see Corollary \ref{cor:nec_cond}.
Finally, our main result, Theorem \ref{thm:H_torsCond}, describes exactly 
when Heegner divisors are torsion elements in the Picard group $\Pic(\Gamma_\ell\backslash U_\epsilon(\ell))$.
For a unimodular lattice $L$, the theorem can be formulated as follows, for the general version, see Theorem \ref{thm:H_torsCond} on p.\ \pageref{thm:H_torsCond}:
\begin{theorem}\label{thm:main_intro}
A finite linear combination of local Heegner divisors of the form
\[
\HeegU = \frac{1}{2} \sum_{\substack{m \in \Z \\ m<0}} c(m) 
\sum_{\substack{\lambda \in D \\ \hlf{\lambda}{\lambda} = m}} \HeegU_\infty(\lambda)
\]
with coefficients $c(m) \in \Z$, is a torsion element 
in the Picard group $\Pic(\Gamma_\ell\backslash U_\epsilon(\ell))$, if and only if
the equation 
\[
\sum_{\substack{m \in \Z \\ m<0}} c(m) \sum_{\substack{\lambda \in D \\ \Qf{\lambda} = m }}
\left[  F_\lambda(t,t') - \abs{\Disc}\frac{\hlf{\lambda}{\lambda}}{n} \hlf{t'}{t}   \right] = 0
\]
holds for all $t,t' \in \Dg$. Here, $F_\lambda$ is the bilinear form from \eqref{eq:chern_intro} above. 
\end{theorem}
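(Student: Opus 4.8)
The plan is to reduce the statement to a computation in the group cohomology $\Htwo(\Gamma_\ell,\Z)$ and then to invoke the torsion criterion of Lemma \ref{lemma:tors_lcH}. First I would use the identification of $\Pic(\Gamma_\ell\backslash\Ueps(\ell))$ up to torsion with $\Htwo(\Gamma_\ell,\Z)$ through the first Chern class $c_1$ (set up in section \ref{subsec:chern} and on p.~\pageref{sec:heegner}): an element is torsion in the Picard group precisely when its Chern class is a torsion element of $\Htwo(\Gamma_\ell,\Z)$. Since $c_1$ is additive and, by \eqref{eq:chern_intro}, sends $\HeegU_\infty(\lambda)$ to the class of the cocycle $c_\lambda(\cdot,\cdot)=-\Im[\abs{\sqrtD}\,F_\lambda(\cdot,\cdot)]$, the Chern class of $\HeegU$ is represented by
\[
c_{\HeegU}([h,t],[h',t']) = -\Im\!\left[\abs{\sqrtD}\,G(t,t')\right], \qquad G \vcentcolon= \tfrac12\sum_{m<0} c(m)\!\!\sum_{\substack{\lambda\in\Dltc\\ \Qf{\lambda}=m}}\!\! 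F_\lambda,
\]
a cocycle inflated from the abelianization lattice $\Dg$, since it depends only on $t,t'$.

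Next I would feed $c_{\HeegU}$ into Lemma \ref{lemma:tors_lcH}. The content of that lemma is that in $\Htwo(\Gamma_\ell,\Z)$ an inflated bilinear-form cocycle of this Hermitian type is torsion exactly when the underlying form $G$ is a scalar multiple of the Hermitian form $\hlfempty$ on $\Dg$; the reason is that the only obstruction to being a coboundary up to torsion is the class of the central extension defining the Heisenberg group $\Gamma_\ell$, whose cocycle is a constant times $\Im[\abs{\sqrtD}\,\hlfempty]$. Here I expect the $\OK$-module structure of $\Dg$ to enter essentially: $F_\lambda$, and hence $G$, is $\C$-linear in its second argument, so evaluating the real cocycle condition on the pairs $(t,t')$ and $(t,\sqrtD\,t')$ recovers both the real and the imaginary part of $G$, upgrading the cohomological (imaginary-part) identity to the full $\C$-valued equation $G(t,t')=\kappa\,\hlf{t'}{t}$ for all $t,t'\in\Dg$ and some constant $\kappa$.

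Finally I would pin down $\kappa$ by means of the necessary condition in Corollary \ref{cor:nec_cond}, i.e.\ by taking the trace over $\Dltc$: the Hermitian component of $F_\lambda$ has trace proportional to $\Qf{\lambda}$, while $\hlfempty$ has trace equal to the rank $n$ of $\Dltc$ over $\OK$, and tracking the two factors of $\abs{\sqrtD}$ (producing $\abs{\sqrtD}^2=\abs{\Disc}$ through the integral structure coming from $\Diffk^{-1}$) forces the coefficient $\abs{\Disc}\,\Qf{\lambda}/n$ per summand. Substituting this value of $\kappa$ turns $G(t,t')=\kappa\,\hlf{t'}{t}$ into the displayed equation, giving both implications. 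The main obstacle is precisely this last translation: matching the intrinsic cohomological torsion criterion of Lemma \ref{lemma:tors_lcH} with the explicit bilinear identity, in particular justifying the passage from the real cocycle condition to the full complex-bilinear equation via the $\OK$-action, and carrying the discriminant normalization so that exactly the factor $\abs{\Disc}/n$ appears.
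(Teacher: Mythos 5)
Your reduction works for one direction only; the step where you assert that ``an element is torsion in the Picard group precisely when its Chern class is a torsion element of $\Htwo(\Gamma_\ell,\Z)$'' is a genuine gap. The paper's exact sequence \eqref{eq:seqPictoH2},
\[
\Hom(\Dg,\Peps^*) \longrightarrow \Pic(\Gamma_\ell\backslash\Ueps(\ell)) \xrightarrow{\;\delta\;} \Htwo(\Gamma_\ell,\Z) \longrightarrow \Htwo(\Gamma_\ell,\Oeps),
\]
only identifies the kernel of the Chern class map $\delta$ with the image of $\Hom(\Dg,\Peps^*)$, and nothing forces that image to be torsion. So torsion of $\HeegU$ in the Picard group does imply torsion of $\delta(\HeegU)$, and Lemma \ref{lemma:tors_lcH} then yields the displayed equation --- that half of your argument, including the use of the $\OK$-action (replacing $t'$ by a purely imaginary multiple) to upgrade the imaginary-part identity to the full complex bilinear equation, and the trace computation pinning down the coefficient $\abs{\Disc}\Qf{\lambda}/n$, is exactly the paper's. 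But the converse cannot be extracted from the cohomological criterion alone: knowing that $\delta(\HeegU)$ is torsion determines $\HeegU$ only up to an a priori non-torsion element of the kernel of $\delta$.

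The paper closes this gap constructively. Assuming the equation holds, it takes the explicit automorphy factor $J_{\HeegU}$ of the local Borcherds product (built from Proposition \ref{prop:J_lambda}) and shows, by substituting the relation \eqref{eq:tors_H} into the exponents, that $J_{\HeegU}$ factors as a product of genuinely trivial automorphy factors --- coboundaries $j_1(g,z)=f(gz)/f(z)$ with $f(z)=e(c\tau)$, and $j_2$ arising from $g(z)=e\bigl(\hlf{\sigma}{\mu}\bigr)$ --- times factors of finite order. This exhibits $\HeegU$ itself, not merely its Chern class, as a torsion element of $\Pic(\Gamma_\ell\backslash\Ueps(\ell))$. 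To repair your proposal you would need either this construction or a separate argument that the kernel of $\delta$ meets the subgroup generated by local Heegner divisors only in torsion classes; neither is supplied by Lemma \ref{lemma:tors_lcH}.
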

As an application of the theorem, we study the obstructions for a (local) Heegner divisor to be a torsion element.
It turns out that they are given by certain spaces of cusp forms spanned by theta-series.
This result is Theorem \ref{thm:Obst} in section \ref{subsec:mf_lgkz}, which here can stated as follows,   
with $G=\SL_2(\Z)$ and $k=n+2$:
\begin{theorem}\label{thm:obst_intro}
A finite linear combination  of Heegner divisors
\[ 
\HeegU =  \frac{1}{2} \sum_{\substack{m \in \Z \\ m<0}} c(m) 
\sum_{\substack{\lambda \in D \\ \hlf{\lambda}{\lambda} = m}} \HeegU_\infty(\lambda)
\]
is a torsion element in $\Pic(\Gamma_\ell\backslash \Ueps(\ell))$ if and only if 
\[
\sum_{\substack{m \in \Z \\ m<0}} c(m) a(-m) = 0
\]
for all cusp forms $f \in \mathcal{S}_{k}^\Theta(G)$ with Fourier 
coefficients $a(m)$. Here, $\mathcal{S}_{k}^\Theta(G) \subset \mathcal{S}_{k}(G)$
denotes a space of cusp forms spanned by certain (positive-definite) theta-series, see p.\ \pageref{thm:Obst} for the precise definition. 
\end{theorem}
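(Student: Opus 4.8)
The plan is to deduce Theorem~\ref{thm:obst_intro} from the torsion criterion of Theorem~\ref{thm:main_intro} by recognizing the bilinear identity there as a statement about the Fourier coefficients of theta series attached to the definite lattice $\Dltc$. Writing $P_{t,t'}(\lambda)\vcentcolon= F_\lambda(t,t')-\abs{\Disc}\tfrac{\Qf{\lambda}}{n}\hlf{t'}{t}$ for the bracketed expression, Theorem~\ref{thm:main_intro} tells us that $\HeegU$ is a torsion element in $\Pic(\Gamma_\ell\backslash\Ueps(\ell))$ if and only if
\[
\sum_{\substack{m\in\Z\\ m<0}} c(m)\sum_{\substack{\lambda\in\Dltc\\ \Qf{\lambda}=m}} P_{t,t'}(\lambda)=0 \qquad\text{for all } t,t'\in\Dg.
\]
Thus it suffices, for each fixed pair $(t,t')$, to realize the inner sum over $\lambda$ as a Fourier coefficient of a cusp form and then to pass to the span by linearity.

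First I would set up the theta series. Since $V$ has signature $(1,n+1)$, the lattice $\Dltc$ is negative definite of hermitian rank $n$, hence, for the quadratic form $-\Qf{\cdot}$, a positive definite even $\Z$-lattice of rank $2n$. For fixed $t,t'$ the map $\lambda\mapsto P_{t,t'}(\lambda)$ is homogeneous of degree $2$ in the real coordinates of $\lambda$, and the key point is that it is \emph{harmonic} for $-\Qf{\cdot}$: the polynomial $\lambda\mapsto F_\lambda(t,t')$ has Laplacian a constant multiple of $\hlf{t'}{t}$, and the correction term $-\abs{\Disc}\tfrac{\Qf{\lambda}}{n}\hlf{t'}{t}$ is exactly the trace part needed to cancel it, the normalization $\abs{\Disc}/n$ reflecting the complex dimension $n$ of $\Dltc\otimes_{\OK}\C$. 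Granting this, the theta series
\[
\Theta_{t,t'}(\tau)\vcentcolon=\sum_{\lambda\in\Dltc} P_{t,t'}(\lambda)\, e\bigl(-\Qf{\lambda}\,\tau\bigr)
\]
is, by the classical transformation theory of theta series with harmonic coefficients, a modular form of weight $k=n+2$; as $L$ is unimodular the lattice $\Dltc$ has level one, so $\Theta_{t,t'}$ transforms under $G=\SL_2(\Z)$, and since $P_{t,t'}$ is harmonic of positive degree (in particular $P_{t,t'}(0)=0$) it lies in $\Cuspf_k(G)$.

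I would then define $\Cuspf_k^\Theta(G)$ to be the subspace of $\Cuspf_k(G)$ spanned by the forms $\Theta_{t,t'}$ as $(t,t')$ ranges over $\Dg\times\Dg$. By construction the $N$-th Fourier coefficient of $\Theta_{t,t'}$ is $a_{t,t'}(N)=\sum_{\lambda\in\Dltc,\,\Qf{\lambda}=-N}P_{t,t'}(\lambda)$, so for every $m<0$ the inner sum in the criterion equals $a_{t,t'}(-m)$, and the torsion condition becomes $\sum_{m<0}c(m)\,a_{t,t'}(-m)=0$ for all $t,t'\in\Dg$. The linear functional $f\mapsto\sum_{m<0}c(m)\,a_f(-m)$ on $\Cuspf_k(G)$ vanishes on the generating set $\{\Theta_{t,t'}\}$ precisely when it vanishes on all of $\Cuspf_k^\Theta(G)$, which is the asserted equivalence.

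The hard part will be the harmonicity computation together with the precise modularity of $\Theta_{t,t'}$. Because $\hlfempty$ is sesquilinear, one must pass carefully from the hermitian form on $\Dltc\otimes_{\OK}\C$ to the underlying real quadratic form of rank $2n$ and keep track of the factor relating the real rank to the complex dimension $n$; this is where the constant $\abs{\Disc}/n$ originates, and getting it right is what makes $P_{t,t'}$ harmonic on the nose. Once harmonicity is in hand, the weight, level and cuspidality follow from the standard theory, and for non-unimodular $L$ the same scheme applies after replacing $\SL_2(\Z)$ by the appropriate group and using a vector-valued theta series, as in the general Theorem~\ref{thm:Obst}.
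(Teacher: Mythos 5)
Your proposal follows essentially the same route as the paper's proof of Theorem \ref{thm:Obst}: reduce to the torsion criterion, recognize the inner sums over $\lambda$ as Fourier coefficients of theta series with harmonic quadratic polynomial coefficients, and take the span of these theta series as the obstruction space — the paper merely works with the real part of the polynomial on the diagonal $t=t'$ and recovers the general case by polarization, which produces the same span as your $\Theta_{t,t'}$. One caveat: the correct harmonic polynomial is $F_\lambda(t,t') - \tfrac{\Qf{\lambda}}{n}\hlf{t'}{t}$ \emph{without} the factor $\abs{\Disc}$ (as in Lemma \ref{lemma:tors_lcH} and \eqref{eq:tors_H}); with the extra $\abs{\Disc}$ carried over from the introduction's statement the trace term no longer cancels the Laplacian of $F_\lambda$, so your assertion that this normalization is ``exactly'' what makes $P_{t,t'}$ harmonic needs to be adjusted to the body's normalization.
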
 
Theorem  \ref{thm:obst_intro} can be seen a local analog to the global obstruction result showed by the author in \citep[][Section 5]{Hof14},
which in turn is a unitary group version of the obstruction theory  developed by Borcherds using Serre-duality \citep[see][Theorem 3.1]{Bo98}.   
We discuss the relationship between the local and the global obstruction theories in section \ref{subsec:localglobal},
and also how the two theorems relate to the quite similar results obtained by Bruinier and Freitag in the setting of orthogonal groups 
 \citep[see][Proposition 5.2, Theorem 5.4]{BrFr}.  Our results are also to some extent related to the results of Bruinier, Howard and Yang \cite{BHY15} 
and to recent work of Funke and Millson. 

\textit{The paper is structured as follows:}
In the first section, we present the set-up and notation used throughout.  
We introduce a Siegel domain model of the symmetric domain, with the
fixed isotropic lattice vector $\ell$ corresponding to the cusp at infinity.
We then describe the stabilizer of this cusp and define the Heisenberg group $\Gamma_\ell$.
Also, we sketch the construction of the compactification used for $X_\Gamma$.

In section \ref{sec:cohom}, we study the cohomology of the Heisenberg group $\Gamma_\ell$ and 
derive criteria describing when certain two-cocyles obtained from bilinear forms are torsion elements
in the cohomology  group $\Htwo(\Gamma_\ell, \Z)$. The following section \ref{sec:heegner} is the main part of the paper:
Here, we study Heegner divisors, we introduce the local Borcherds products and we determine their Chern classes.  
Using the results established in the second section, we get an equivalent condition for a linear combination of Heegner divisors to be  a torsion element in the cohomology,  Lemma \ref{lemma:tors_lcH} on p.\ \pageref{lemma:tors_lcH}. A further, necessary condition follows from the proof, see Corollary \ref{cor:nec_cond}. 
Finally, as our main result, we derive Theorem \ref{thm:H_torsCond}, part of which follows from the Lemma, while the converse is proved  constructively. 

The last section closes with the application to modular forms: 
In Theorem \ref{thm:Obst} we find that cusp forms arising from certain theta-series constitute the obstructions for a local Heegner divisor to be a torsion element in the Picard group.

\section{Hermitian lattices and symmetric domains}
\subsection{Hermitian spaces and lattices} \label{subsec:ltcs}
Let $\fieldk = \Q(\sqrt{\Disc})$ be an imaginary quadratic number field of discriminant $\Disc$, with $\Disc$ a square-free negative integer. 
Let $\OK \subset \fieldk$ be the ring of integers in $\fieldk$. Denote  by $\Diffk$ the different 
ideal and by $\Diffk^{-1}$ the inverse different ideal.

We shall consider $\fieldk$ as a subset of the complex numbers $\C$ and denote 
by $\sqrtD$ the square-root of the discriminant, with the usual choice of the complex square-root.
Then, $\Diffk$ is given by $\sqrtD \OK$ and $\Diffk^{-1}$ by $\sqrtD^{-1}\OK$.  

Let $V = \Vk$ be an indefinite  hermitian space over $\fieldk$ of signature $(1,n+1)$, 
endowed with a non-degenerate hermitian form denoted $\hlfempty$, linear in the left and conjugate linear in 
the right argument.
A complex hermitian space $V_\C = V\otimes_\fieldk\C$ is  obtained by extension of 
scalars. 
We denote by $V_\Q$ the $\Q$-vector space underlying $V$, which bears the structure of a quadratic 
space of signature $(2,2n+2)$  with the quadratic form $\Qf{\cdot}$ defined by $\Qf{x} \vcentcolon 
= \hlf{x}{x}$. Similarly, the real quadratic space underlying $V_\C$ is denoted $V_\R$. We have $V_\R =  V_\Q \otimes_\Q \R$.

Let $L$ be a lattice in $V$, with $L\otimes_{\OK}\fieldk = V$.
We denote by $L'$ the $\Z$-dual of $L$, defined as the set
\[
L' = \left\{ x\in V;\; \hlf{x}{y}\in \Diffk^{-1} \quad\text{for all $y \in L$}
\right\} = \left\{ x\in V;\; \tr_{\fieldk/\Q} \hlf{x}{y}\in \Z \quad\text{for all $y \in 
L$}\right\}.
\]
Naturally, $L'$ is a lattice in $V$, too. 
If $L\subseteq L'$, the lattice $L$ is called integral. If further for all $x\in L$, $\hlf{x}{x} \in \Z$, then $L$ is called even. Finally, $L$ is unimodular, if $L' = L$. 
The quotient $L'/L$ is referred to as the discriminant group of $L$.

More generally in the context of this paper, 
by a hermitian lattice  we mean a discrete subgroup $M$ of $V$, 
for which the ring of multipliers $\order{M}$ is an order in $k$.  
(A multiplier of $M$ is a complex number $\alpha$ with $\alpha M \subset M$.)
Most lattices will occur here as sub-lattices of a fixed lattice $L$,
with $L$ as above, of full rank, hermitian and even. 

Denote by $\Ug(V)$ the unitary group of $V$, and by $\SU(V)$ the special unitary group. The 
isometry group of a lattice $L$ in $\mathrm{U}(V)$ is denoted $\mathrm{U}(L)$, similarly for $\mathrm{SU}(L)$. The \emph{discriminant kernel} $\Gamma_L$ is the subgroup of finite index in $\SU(L)$ which acts trivially on the discriminant group of $L$.
We refer to subgroups of finite index in $\Gamma_L$ as unitary modular groups. 
In the following, $\Gamma$ will always denote a unitary modular group.

\subsection{A symmetric domain} \label{subsec:symm_domain}
Viewing $\Ug(V)$ as an algebraic group, its set of real points, denoted $\Ug(V)(\R)$, is the 
unitary group of $V_\C$. 
A symmetric domain for the action of $\Ug(V)(\R)$ on $V_\C$ is given by the quotient $\DomU = \Ug(V)(\R) / \mathcal{K}$ with a maximal compact subgroup $\mathcal{K}$.
Denote by  $\mathbb{P}V_\C$ the projective space of $V_\C$. A projective model for $\DomU$ is 
 given by the \emph{positive cone}
\[
\coneU = \left\{ [v]\in \mathbb{P}V_\C\,;\, \hlf{v}{v} > 0 \right\}. 
\]
We briefly review the construction of an affine model. 
Denote by $\Iso(V)$ the set of one-dimensional isotropic subspaces of $\Vk$. Its elements 
are in one-to-one correspondence with the rational boundary components of the symmetric domain. 
In particular, we fix an element $ I \in \Iso(V)$ by choosing a primitive isotropic 
lattice vector $\ell \in L$ and setting $I = \fieldk \ell$. Further, we choose a primitive vector 
$\ell' \in L'$ such that $\hlf{\ell}{\ell'} \neq 0$. We shall assume that $\ell'$ is isotropic, 
too. Note that this is a non-trivial assumption about the hermitian lattice $L$ and its 
dual. 

For $a \in V$, we denote by $a^\perp$ the orthogonal complement with respect to $\hlfempty$. We set 
$D\vcentcolon = L\cap \ell^\perp \cap \ell'^\perp$. Equipped with the restriction of $\hlfempty$, 
$D$ is a definite hermitian lattice of signature $(0,n)$. 
Denote by $W = W_\fieldk$ the subspace $D\otimes_{\OK}\fieldk$, and let $W_\C = W \otimes_\fieldk 
\C$. 

Now, an affine model for $\DomU$, called the Siegel domain model,
is given by the following generalized upper-half-plane:
\[
\HU_{\ell, \ell'} = \left\{ (\tau, \sigma) \in \C \times W_\C\,;\, 
2\Im(\tau)\abs{\sqrtD}\abs{\hlf{\ell}{\ell'}}^2 > 
- \hlf{\sigma}{\sigma}  \right\}. 
\]
For $(\tau, \sigma)\in\HU_{\ell, \ell'}$, we set 
\[
z = z(\tau, \sigma) \vcentcolon = \ell' - \sqrtD \tau\hlf{\ell'}{\ell}\ell + \sigma.
\] 
Clearly, under the canonical projection $\projV: V_\C \rightarrow \mathbb{P}V_\C$, we have
$\projV(z) \in \coneU$ for all $(\tau, \sigma)\in\HU_{\ell, \ell'}$. Conversely, every $[v] \in 
\coneU$ contains a representative of the form $z(\tau, \sigma)$ for some pair $(\tau, \sigma) 
\in \HU_{\ell, \ell'}$. Usually, in the following, since $\ell$ and $\ell'$ are fixed, we shall 
simply write $\HU = \HU_{\ell, \ell'}$.

The isotropic line $I_\C = I\otimes_\fieldk\C = [\ell]$ corresponds to the cusp at infinity of 
$\HU$.

\subsection{Stabilizer of the cusp} 
Next, we will describe the stabilizer in $\Gamma$ of the cusp $[\ell]$. 
Consider the following transformations corresponding to elements of $\SU(V)$: 
\begin{align}
\label{def:translationsU} [h,0]:\;& v \; \mapsto \; v - \hlf{v}{\ell}\sqrtD 
h \ell & \quad\text{for $h\in\Q$},    \\
\label{def:eichlerU} [0,t]:\;& v \; \mapsto \; v + \hlf{v}{\ell}t - 
\hlf{v}{t}\ell 
        - \frac{1}{2}\hlf{v}{\ell}\hlf{t}{t}\ell &
        \quad\text{for $t\in W$}.       
\end{align}
Clearly, these transformations stabilize the isotropic subspace $\fieldk\ell$. 
Their action on $\HU$ is given as follows:
\[
[h,0]: (\tau,\sigma) \mapsto (\tau + h,\sigma), \quad
[0,t]: (\tau,\sigma) \mapsto \left( \tau +
\frac{\hlf{\sigma}{t}}{\sqrtD\hlf{\ell'}{\ell}}
+\frac12 \frac{\hlf{t}{t}}{\sqrtD}, \sigma + \hlf{\ell'}{\ell}t\right).
\]
The \emph{Heisenberg group} attached to $\ell$, denoted $\HeisU{\ell}$, 
is the  set of pairs $[h,t]$ with group law given by
\begin{equation}\label{eq:Hgrp_law}
[h,t]\circ [h',t'] = \bigl[h + h' +
\frac{\Im\hlf{t'}{t}}{\abs{\sqrtD}}, t + t'\bigr].
\end{equation}
Here, we follow the convention that 
$\left([h,t]\circ[h',t']\right)v = [h,t]\left( [h',t']\, v \right)$ for $v\in V_\fieldk$. 
The center of the Heisenberg group consists of transformations of type \eqref{def:translationsU}. 

We denote  by $\Gamma_\ell$ the subgroup of $\Gamma$ given by the intersection $\Gamma\cap \HeisU{\ell}$,
its center we denote by $\Gamma_{\ell, T}$. 
The full stabilizer of the cusp in $\Gamma$  is given by the semi-direct product 
\[
\Gamma_\ell \ltimes (\Ug(W) \cap \Gamma)) = \operatorname{Stab}_\Gamma(\ell).
\]
Note that $\Gamma_\ell$ has finite index in the stabilizer. 

The elements of $\Gamma_\ell$ can be described as follows (this is well-known):
\begin{rmk}\label{rmk:ParabN}
Suppose $\Gamma$ is a unitary modular group and let $\Gamma_\ell = \Gamma \cap 
\HeisU{\ell}$.  Then there exist a positive rational number $\Ng$ and a lattice $\Dg$ of finite 
index in $\Dltc$, such that $[h,t] \in \Gamma_\ell$ for all $h \in \Ng\Z$, $t\in \Dg$, and that
$ \abs{\sqrtD}^{-1}\Im\hlf{t'}{t} \in \Ng\Z$ for all  $t,t'\in \Dg$.
\end{rmk}

\subsection{Boundary components}\label{subsec:cmpct}
The modular variety $X_\Gamma$ is given by the quotient
\[
\Gamma\backslash \DomU \simeq \Gamma \backslash \Ug(V)(\R) / \mathcal{K}
\simeq \Gamma \backslash \HU.
\]
Note that $X_\Gamma$ is non-compact. The usual Baily-Borel compactification  $X_{\Gamma, {BB}}^*$ is obtained by 
introducing a topology and a complex structure on the quotient
\[
 \Gamma  \backslash \left( \HU \cup \left\{I_\R;\; I \in \Iso(V)\right\} \right).
\]
We sketch this for the cusp at infinity, defined by $[\ell]$: 
The following sets constitute a system of neighborhoods of the cusp
\begin{equation} \label{eq:defUeps}
U_\epsilon(\ell) = \left\{ [z]\in \coneU ;\,  \frac{ \hlf{z}{z} }{ \abs{\hlf{z}{\ell}}^2 } 
\abs{\hlf{\ell'}{\ell}}^2> \frac1{\epsilon}  \right\} \quad\left( \epsilon>0 \right).
\end{equation}
A subset $V$ of $\coneU \cup \{[\ell]\}$ is called open if $V \cap  \coneU$ is open in 
the usual sense and further if $[\ell] \in V$ implies $U_\epsilon(\ell) \subset V$ for some $\epsilon>0$. 

Through the quotient topology, this construction yields a topology on $\Gamma\backslash \left( 
\coneU \cup \{ [\ell]\} \right)$. The complex structure is defined 
though the pullback under 
the canonical projection $\coneU \cap \{ I_\R;\; I \in \Iso(V)\} \rightarrow X_{\Gamma, 
{BB}}^*$, locally for each cusp, see \cite{Hof11} for details. 
This way, one gets the structure of a normal complex space on  $X_{\Gamma, {BB}}^*$. In general, 
however, there are still singularities at the boundary points. 

This difficulty can be avoided by using a toroidal compactification, instead. We recall the
construction briefly;  see \citep[][Chapter 1.1.5]{Hof11} and, in particular \citep[][Section 4.3]{BHY15} for more 
details. 
In the following, identify the sets $U_\epsilon(\ell) \subset \coneU$ with the 
corresponding sets of representatives in $\HU_{\ell, \ell'}$.
Clearly, the Heisenberg group $\Gamma_\ell$ operates on 
$U_\epsilon(\ell)$. For sufficiently small $\epsilon$, there is an open immersion 
\[
\Gamma_\ell \backslash  U_\epsilon(\ell) \rightarrow X_\Gamma. 
\]
Recall that for the center $C(\Gamma_ \ell) = \Gamma_{\ell, T}$, we have $\Gamma_{\ell, T} \simeq 
\Z\Ng$. Set $q_\ell \vcentcolon = \exp( 2\pi  i  \tau/ \Ng )$. 
The quotient $\Gamma_{\ell, T}\backslash U_\epsilon(\ell)$ 
can now be viewed as bundle of punctured disks over  $W_\C$: 
\[
V_\epsilon(\ell) \vcentcolon= \Gamma_{\ell, T}\backslash U_\epsilon(\ell) \simeq \left\{  (q_\ell, 
\sigma);\;
0<  \abs{q_\ell} < \exp\left(\frac{\pi \hlf{\sigma}{\sigma} + \epsilon^{-1}}{\abs{\sqrtD}^2 
\abs{\hlf{\ell'}{\ell}}^2} \right) \right\}.
\]
Adding the center to each disk, we get the disk bundle
\[
\widetilde{V_\epsilon}(\ell) \vcentcolon=  \left\{  (q_\ell, \sigma);  \;
 \abs{q_\ell} < \exp\left(\frac{\pi \hlf{\sigma}{\sigma} + \epsilon^{-1}}{\abs{\sqrtD}^2 
\abs{\hlf{\ell'}{\ell}}^2} \right) \right\}. 
\]
The action of $\Gamma_\ell$ is well-defined at each center, leaving the divisor $q=0$ fixed. 
 Also, if $\Gamma$ is sufficiently small, the operation is free, hence we get an open 
immersion
\begin{equation} \label{eq:immersionglue}
\Gamma_\ell \backslash U_\epsilon(\ell) \rightarrow \left( \Gamma_\ell / \Gamma_{\ell, T} 
\right) \backslash \widetilde{V_\epsilon}(\ell),
\end{equation}
by which the right hand side can be glued to $X_\Gamma$, yielding a partial 
compactification. 
For a point $(0, \sigma_0) \in \widetilde{V_\epsilon}(\ell)$, we define a system of open sets 
\[
B_\delta(0, \sigma_0) =  \left\{ (q_\ell, \sigma)\in\widetilde{V_\epsilon}(\ell) \,;\, 
\hlf{\sigma-\sigma_0}{\sigma - \sigma_0} < \delta, \abs{q_\ell} < \delta \right\}\qquad(\delta > 
0).
\]
Under the immersion \eqref{eq:immersionglue} the images of these sets form a system of open 
neighborhoods for the boundary point at $(0, \sigma_0)$. 

Repeating this construction and the gluing procedure for every $I \in \Gamma \backslash \Iso(V)$ yields a 
compactification of $X_\Gamma$, which we denote $X_{\Gamma, tor}^*$.

\section{The local cohomology group} \label{sec:cohom}
In the following, let $\Gamma$ be a unitary modular group and let $\Gamma_\ell \subset \Gamma$ be a 
Heisenberg group  of the form $\Gamma_\ell = \Ng\Z \rtimes \Dg$ with 
$\Ng\in\Q_{>0}$ and $\Dg\subseteq \Dltc$ as introduced in Remark \ref{rmk:ParabN}.
We are interested in the cohomology of $\Gamma_\ell$, more 
specifically the second cohomology group $\Htwo(\Gamma_\ell, \Z)$.

As usual, if $G$ is a group acting on an abelian group $A$, the n-th cohomology group is defined as 
the quotient
\[
 \KH^n(G, A) = \frac{\operatorname{ker}(\CoC^n(G, A)\xrightarrow{\partial} 
\CoC^{n+1}(G,A))}{\operatorname{im}(\CoC^{n-1}(G,A)\xrightarrow{\partial} 
\CoC^{n}(G,A))},
\]
wherein $\CoC^{n}$ is the set of $n$-cocycles, consisting of all functions $f: G^n\rightarrow A$, and $\partial$ is the coboundary operator. In the present setting, $G=\Gamma_\ell$, $A=\Z$ and the 
action  of $G$ is trivial.

Let $U_\epsilon(\ell)$ be a neighborhood of the cusp of infinity, as defined in 
\eqref{eq:defUeps} above, with $\epsilon$ sufficiently small, so that the map in \eqref{eq:immersionglue} 
is indeed an open immersion.
Further, denote by $\Oeps = \Oeps(U_\epsilon(\ell))$ the sheaf of holomorphic functions on $U_\epsilon(\ell)$ and by $\Oeps^* = \Oeps(U_\epsilon(\ell))^*$ the sheaf  of invertible holomophic functions. 
The action of $\Gamma_\ell$ on $U_\epsilon(\ell)$ naturally induces an action on $\Oeps$ and $\Oeps^*$. 
The exact sequence  
\[
\begin{tikzcd} 0 \arrow[r] & \Z \arrow[r, "\imath"] & \Oeps \arrow[r, "e"] 
& \Oeps^* \arrow[r] & 0 \end{tikzcd} 
\]
thus induces an exact sequence of cohomology groups:
\begin{equation}\label{eq:seqH2ZtoOps}
\begin{tikzcd} \KH^1(\Gamma_\ell, \Z) \arrow[r] & \KH^1(\Gamma_\ell, \Oeps) \arrow[r] & \KH^1(\Gamma_\ell, \Oeps^*) \arrow[r, "\delta"] & \KH^2(\Gamma_\ell, \Z) \arrow[r] 
& \KH^2(\Gamma_\ell, \Oeps). 
\end{tikzcd} 
\end{equation}
The Picard group of $\Gamma_\ell \backslash \Ueps(\ell)$ is given by  $\KH^1(\Gamma_\ell \backslash \Ueps(\ell), \Oeps^*)$. Since the open neighborhoods $\Ueps(\ell)$ are contractible, all analytic line bundles on $\Ueps(\ell)$ are trivial. Therefore,
\begin{equation}\label{eq:Pic_H1} 
\Pic(\Gamma_\ell \backslash \Ueps(\ell)) = \KH^1(\Gamma_\ell, \Oeps^*). 
\end{equation}
Further, let $\Peps$ denote the functions in $\Oeps$ which are periodic for the action of $\Ng\Z$. 
As $\Ng\Z = \Gamma_{\ell, T}$ is a normal subgroup with $\Gamma_\ell / \Ng\Z = \Dg$,  and since $\Ng\Z\backslash\Ueps(\ell)$ is contractible, we have
$\KH^p(\Gamma_\ell, \Oeps) = \KH^p(\Dg, \Peps)$ $(p=1,2,\dotsc)$.  
Thus, from the exact sequences in \eqref{eq:seqH2ZtoOps} and \eqref{eq:Pic_H1}, we get the exact sequence
\begin{equation}\label{eq:KHwithP} 
\begin{tikzcd} \frac{\displaystyle\Hom(\Dg, \Peps)}{\displaystyle\Hom(\Dg, \Z)} \arrow[r] & 
 \Pic( \Gamma_\ell \backslash \Ueps(\ell)) \arrow[r] &
 \KH^2(\Gamma_\ell, \Z) \arrow[r] 
& \KH^2(\Gamma_\ell, \Oeps).  
\end{tikzcd} 
\end{equation}
Further, since $\Dg$ is a free group, the following sequence is exact:
\[
\begin{tikzcd} 0 \arrow[r] & \Hom(\Dg, \Z) \arrow[r] & \Hom (\Dg, \Peps) \arrow[r] & \Hom(\Dg, \Peps^*) \arrow[r] & 0.
\end{tikzcd} 
\]
Whence,  from \eqref{eq:KHwithP} we find the exact sequence
\begin{equation}\label{eq:seqPictoH2}
\begin{tikzcd}
 \Hom(\Dg, \Peps^*) \arrow[r] & 
 \Pic( \Gamma_\ell \backslash \Ueps(\ell)) \arrow[r] &
 \KH^2(\Gamma_\ell, \Z) \arrow[r] 
& \KH^2(\Gamma_\ell, \Oeps). 
\end{tikzcd}
\end{equation}
Hence, to study $ \Pic( \Gamma_\ell \backslash \Ueps(\ell))$ we want to examine the structure of $\Htwo(\Gamma_\ell, \Z)$.

\subsection{Bilinear forms in the cohomology}
In this subsection, we will examine the image of certain bilinear forms in the cohomology. 
All calculations are carried out using the standard inhomogeneous complex of group 
cohomology \citep[cf.][Chapter 8]{Shimura}. 
\begin{definition}\label{def:BIL} 
Consider the set of bilinear forms $B: W_\C \otimes W_\C \longrightarrow \R$,
for which there is either a hermitian form $H$ or a symmetric complex bilinear form $G$ such that 
$B = \Im H$ or $B= \Im G$, respectively. 
Such forms generate a vector space of real bilinear forms on $W_\C$, which we denote $\BIL$.
Further, let $\BilZ$ denote  the set of forms in $\BIL$ which are $\Z$-valued on the lattice $\Dg$. 
\end{definition}
To a bilinear form in  $\BIL$ we can associate an element of $\Htwo(\Gamma_\ell, \Oeps)$:
Define the two-cocycle in $C^2(\Gamma_\ell, \Oeps)$ by setting 
 \begin{equation}\label{eq:BiltoH2e} 
B([h,t], [h',t']) \vcentcolon = B(t,t')\qquad\left( [h,t], [h',t'] \in \Gamma_\ell  \right).  
\end{equation}
The class $[B]$ of this cocycle is the image of $B$ in the cohomology. 
For $B \in \BilZ$ we also define a two-coycle in $\CoC^2(\Gamma_\ell,\Z)$ and the attached element in $\Htwo(\Gamma_\ell, \Z)$.
Thus, composing with the natural map $\Htwo(\Gamma_\ell, \Z)  \rightarrow  \Htwo(\Gamma_\ell, \Oeps)$ from \eqref{eq:seqH2ZtoOps} we have a sequence 
\begin{equation}\label{eq:seqBilZ}
\begin{tikzcd}
{\BilZ} \arrow{r} & \Htwo(\Gamma_\ell, \Z) \arrow{r} & \Htwo(\Gamma_\ell, \Oeps).
\end{tikzcd}
\end{equation}
The composition of the two maps in \eqref{eq:seqBilZ} is just the restriction to $\BilZ$ of the map  $\BIL \rightarrow \Htwo(\Gamma_\ell, \Oeps)$ defined by \eqref{eq:BiltoH2e}.
It turns out that the sequence  is exact:
\begin{proposition}\label{prop:seqExact}
The image of $\BIL$ in $\Htwo(\Gamma_\ell, \Oeps)$ vanishes.
\end{proposition}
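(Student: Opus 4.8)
The plan is to show, for every $B\in\BIL$, that the two-cocycle $([h,t],[h',t'])\mapsto B(t,t')$ of \eqref{eq:BiltoH2e} is already a coboundary in $\CoC^2(\Gamma_\ell,\Oeps)$, so that its class vanishes; there is no need to pass through the identification with $\Htwo(\Dg,\Peps)$, since one can work directly with explicit $\Oeps$-valued $1$-cochains. By definition $\BIL$ is spanned over $\R$ by the forms $\Im H$ with $H$ hermitian and $\Im G$ with $G$ symmetric $\C$-bilinear, so by $\R$-linearity it suffices to treat these two kinds of generators. Throughout I would use that the $\Gamma_\ell$-action on $\Oeps$ is pullback by biholomorphisms, hence by $\C$-algebra automorphisms; consequently $\partial\colon\CoC^1(\Gamma_\ell,\Oeps)\to\CoC^2(\Gamma_\ell,\Oeps)$ is $\C$-linear and the two-coboundaries are closed under $\C$-linear combination. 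The decisive structural observation is that $\Im G$ is a \emph{symmetric} and $\Im H$ an \emph{alternating} real bilinear form on $W_\C$.

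For a symmetric $\C$-bilinear $G$ the form $\Im G$ is symmetric, and here a constant cochain suffices. I would set $f([h,t])\vcentcolon=-\tfrac12\,\Im G(t,t)$, a constant function and thus an element of $\Oeps$. Since $\Gamma_\ell$ fixes constants, $\partial f([h,t],[h',t'])=f([h',t'])-f([h,t]\circ[h',t'])+f([h,t])$, and polarization of the symmetric form $\Im G$ gives $\partial f=\Im G$.

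For a hermitian $H$ the form $\Im H$ is alternating, and now constant cochains produce nothing, so I would exploit the nontrivial action. From the action formulas, $[h,t]^{-1}$ sends the holomorphic coordinate $\sigma$ to $\sigma-\hlf{\ell'}{\ell}\,t$. Hence the cochain $f([h,t])\vcentcolon=H(\sigma,t)$, which is linear and holomorphic in $\sigma$ and therefore lies in $\Oeps$, satisfies $\partial f=-\hlf{\ell'}{\ell}\,H(t,t')$, using linearity of $H$ in its first argument; dividing by the nonzero scalar $-\hlf{\ell'}{\ell}$ exhibits the complex-valued form $(t,t')\mapsto H(t,t')$ as a two-coboundary. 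The remaining point, and the only genuinely delicate one, is that $\Im H=\tfrac{1}{2i}\bigl(H(t,t')-\overline{H(t,t')}\bigr)$ should be a coboundary: the conjugate $\overline{H(t,t')}=H(t',t)$ is antiholomorphic in $t$ and is not directly of the form just produced by the $\sigma$-translation. I would resolve this by writing $\overline{H(t,t')}=2\,\Re H(t,t')-H(t,t')$ and noting that $\Re H$ is a real \emph{symmetric} bilinear form, hence a coboundary by the constant-cochain argument above. Thus $\overline{H(t,t')}$ is a difference of coboundaries, and $\Im H$, as a $\C$-linear combination of $H(t,t')$ and $\overline{H(t,t')}$, is a coboundary too.

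The main obstacle is therefore concentrated in the alternating (hermitian) case: because polarization only annihilates symmetric forms, one cannot split $\Im H$ by a constant cochain, and because $\overline H$ is antiholomorphic one cannot split it by the $\sigma$-translation either. The key move that makes everything close is to realize the \emph{full} complex hermitian form via the $\sigma$-translation and then recover its conjugate through the identity $\overline H=2\Re H-H$, which feeds the problem back into the symmetric (constant-cochain) case. Combining the two generator types and invoking $\R$-linearity of the map $\BIL\to\Htwo(\Gamma_\ell,\Oeps)$ then yields the vanishing of the image.
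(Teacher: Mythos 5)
Your proof is correct and follows essentially the same route as the paper: both trivialize the cocycle by explicit $\Oeps$-valued one-cochains built from the holomorphic linear term $H(\sigma,t)$ (resp.\ $G(\sigma,t)$) and from constants quadratic in $t$, the only difference being organizational --- you separate the symmetric part (killed by polarizing a constant cochain) from the full sesquilinear form (produced by the $\sigma$-translation) and recombine via $\overline{H}=2\Re H-H$, whereas the paper writes a single combined cochain for each generator type. In particular, your treatment of the symmetric $\C$-bilinear case by a constant cochain alone is a mild simplification of the paper's cochain, which also carries a $G(\sigma,t)$ term.
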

\begin{proof}
In the following, let $B$ denote an element of $\BIL$. 
Clearly, it suffices to consider the following two cases: Either  $B$ arises from a hermitian form or $B$ arises from a bilinear form. 
\begin{enumerate}
\item Let $H: W_\C \times W_\C \longrightarrow \C$ be a hermitian form. 
Consider the following one-cocycle in $\CoC^1(\Gamma_\ell, \Oeps)$: 
\begin{gather*}
u( [h,t],z) = \frac1{2i}\left[ \frac{2}{\hlf{\ell'}{\ell}} H(\sigma, t) + H(t,t)\right]. \\ 
\shortintertext{Its image under the coboundary map it given by}
\begin{aligned}
\partial u([h,t],[h',t'], z)  &  = [h,t] u( [h', t'] , z)  - u([h,t][h',t'], z) + u([h,t])  \\
& = \frac1{2i}\left( 2 H(t,t') - H(t,t') - H(t',t) \right).
\end{aligned}
\end{gather*}
Thus, we see that $B = \Im H$ is indeed trivialized by a cochain. Hence, its image $\Htwo(\Gamma_\ell, \Oeps)$ vanishes
\item Let $G: W_\C \times W_\C \longrightarrow \C$ be a symmetric complex bilinear form. 
We consider the following one-cocycle valued in $\Oeps$:
\begin{gather*}
u( [h,t],z) =  \frac{i}{2}\left( \frac{1}{\hlf{\ell'}{\ell}} G(\sigma, t) + \frac12 \overline{G(t,t)}\right).\\
\shortintertext{Its image under the coboundary map is given by } 
\partial u([h,t],[h',t'], z) =  \frac{1}{2i} \left( G(t',t) - \frac12\overline{G(t,t')} - \frac12 \overline{G(t',t)}\right) =  \frac{1}{2i} \left( G(t',t) - \overline{G(t',t)}\right).
\end{gather*}
Thus $B = \Im G$ is trivialized by a cochain, and $[B] = 0$ in $\Htwo(\Gamma_\ell, \Oeps)$.
\end{enumerate}
\end{proof}
\begin{rmk} We note that under a map of the type defined in \eqref{eq:BiltoH2e}, the real parts of sesquilinear forms have vanishing image in $\Htwo(\Gamma_\ell, \Oeps)$, too. The proof is quite similar.
\end{rmk}
Now that we know the sequence \eqref{eq:seqBilZ} to be exact, we study the first map  $\BilZ \rightarrow \Htwo(\Gamma_\ell, \Z)$. 
It is far from being  injective. The following Lemma and its proof are essentially due to Freitag, 
a sketch is contained in \cite{FreitagNote07}. 
\begin{lemma}\label{lemma:torsCrit} 
The kernel of the map $\BilZ \rightarrow \Htwo(\Gamma_\ell, \Z)$  is the cyclic group generated by 
the antisymmetric bilinear form 
\[ 
\frac{1}{\Ng}\frac{\Im\hlfempty}{\abs{\sqrtD}}. 
\]
In particular, the image of an element $B \in \BilZ$ is a torsion element in $\Htwo(\Gamma_\ell, \Z)$ if and if $B$ and 
$\abs{\sqrtD}^{-1}\Im\hlfempty$ are linear dependent over $\Z$. 
\end{lemma}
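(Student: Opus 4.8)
The plan is to compute the kernel directly in the inhomogeneous bar complex, exploiting the central extension structure $\Ng\Z \hookrightarrow \Gamma_\ell \twoheadrightarrow \Dg$. Write $\omega := \tfrac{1}{\Ng}\abs{\sqrtD}^{-1}\Im\hlfempty$ for the antisymmetric form in the statement; by Remark \ref{rmk:ParabN} it takes values in $\Z$ on $\Dg$, so $\omega \in \BilZ$, and the cocycle $\omega(t,t')$ is, after normalization by $\tfrac1\Ng$, exactly the cocycle defining the extension through the set-theoretic section $t \mapsto [0,t]$. Since the cocycle $B([h,t],[h',t']) = B(t,t')$ attached to any $B \in \BilZ$ is independent of the central coordinates $h,h'$, it is inflated from $\Dg$, and the whole computation reduces to comparing integral $2$-coboundaries on $\Gamma_\ell$ with those on $\Dg$.

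First I would determine the shape of any integral $1$-cochain $f\colon \Gamma_\ell \to \Z$ with $\partial f([h,t],[h',t']) = B(t,t')$. Setting $t=t'=0$ shows $f$ restricts to a homomorphism on the center, so $f([h,0]) = (h/\Ng)\kappa$ with $\kappa := f([\Ng,0]) \in \Z$. The key rigidity step, which I expect to be the main obstacle, is to show that $f$ must be affine in the central variable, i.e.\ $f([h,t]) = (h/\Ng)\kappa + \phi(t)$ for some cochain $\phi\colon \Dg \to \Z$. This follows by subtracting the coboundary equation at $h$ from the one at $h+\Ng$: the left-hand side $B(t,t')$ is independent of $h$, and because $\abs{\sqrtD}^{-1}\Im\hlf{t'}{t} \in \Ng\Z$ (again Remark \ref{rmk:ParabN}) the central shift stays in $\Ng\Z$, which forces the $\Ng$-difference of $f$ in its first slot to be the constant $\kappa$, independent of $t$. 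It is precisely the integrality built into $\Gamma_\ell$ that allows the central variable to be separated.

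Substituting this form of $f$ into $\partial f = B$ collapses all central terms and yields $B(t,t') = -\kappa\,\omega(t,t') + \bigl(\phi(t)+\phi(t')-\phi(t+t')\bigr)$. Hence $B$ lies in the kernel if and only if $B + \kappa\omega$ is a $2$-coboundary on the free abelian group $\Dg$ for some $\kappa\in\Z$. I would then invoke the standard identification $\Htwo(\Dg,\Z) \cong \wedge^2\Hom(\Dg,\Z)$, under which the class of a bilinear cocycle is its antisymmetric part $(t,t')\mapsto B(t,t')-B(t',t)$ and the integral bilinear coboundaries are exactly the symmetric forms (any $\partial\phi$ is symmetric, and conversely every symmetric integral form is a coboundary, e.g.\ by a quadratic cochain built from binomial coefficients). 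Thus $B+\kappa\omega$ is a coboundary on $\Dg$ iff it is symmetric, i.e.\ iff the antisymmetric part of $B$ equals $-\kappa\omega$; for the antisymmetric forms that arise here (those of type $\Im H$, to which the Chern-class cocycles belong) this forces $B = -\kappa\omega$, giving the asserted cyclic kernel $\Z\omega$.

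Finally, for the \emph{in particular} statement I would repeat the argument with torsion in place of triviality: the image of $B$ has finite order iff $NB$ lies in the kernel for some $N\geq 1$, iff by the above the antisymmetric part of $B$ is a rational multiple of $\omega$, which for the forms in question is exactly the linear dependence of $B$ and $\abs{\sqrtD}^{-1}\Im\hlfempty = \Ng\,\omega$. Since $\Htwo(\Dg,\Z)$ is torsion-free, rational dependence of these two $\Z$-valued forms is equivalent to dependence over $\Z$, which is the stated criterion.
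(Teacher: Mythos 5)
Your argument is correct and, at its computational core, coincides with the paper's: the $1$-cochain you use to separate the central variable, $u([\Ng h,t])=h$, is exactly the one the paper uses, and both proofs identify the obstruction with the extension cocycle $\tfrac1\Ng\abs{\sqrtD}^{-1}\Im\hlf{t'}{t}$. The difference lies in how the surrounding exactness is justified. The paper quotes the five-term exact sequence of the central extension $\Ng\Z\hookrightarrow\Gamma_\ell\twoheadrightarrow\Dg$, so that the kernel of the inflation $\Htwo(\Dg,\Z)\to\Htwo(\Gamma_\ell,\Z)$ is the image of the transgression $\trg$, and then only computes $\trg(\mathbf{1})$; you instead prove the needed inclusion by hand, via the rigidity statement that any integral $1$-cochain trivializing an inflated cocycle must be affine in the central coordinate (your appeal to the integrality $\abs{\sqrtD}^{-1}\Im\hlf{t'}{t}\in\Ng\Z$ from Remark \ref{rmk:ParabN} is exactly what makes this work), and you finish with the explicit identification $\Htwo(\Dg,\Z)\cong\wedge^2\Hom(\Dg,\Z)$. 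What your route buys is a point the paper's formulation glosses over: since integral bilinear coboundaries on the free group $\Dg$ are precisely the symmetric forms, every symmetric element of $\BilZ$ (e.g.\ any integral form $\Im G$ with $G$ complex bilinear symmetric) already dies in $\Htwo(\Gamma_\ell,\Z)$, so the kernel as literally defined is $\Z\omega$ together with the symmetric forms, and the cyclicity and the linear-dependence criterion are really statements about antisymmetric parts; you flag this restriction explicitly, which is more careful than the statement of the Lemma. The one inaccuracy is your parenthetical claim that the Chern-class cocycles are of type $\Im H$: by Proposition \ref{prop:ChernClassHl} they are $\Im\bigl(-\abs{\sqrtD}F_\lambda\bigr)$ with $F_\lambda$ a sum of a symmetric complex bilinear form and a hermitian form, so they do carry a nonzero symmetric component. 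This does not affect your proof of the present Lemma, but it is relevant to how the Lemma is applied later.
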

\begin{proof} The proof uses a \emph{transgression map}, which we introduce next.  

First note that, since the action of $\Z$ is trivial,  the map  $\BilZ \rightarrow \Htwo(\Gamma_\ell, \Z)$ factors over 
 $\Htwo(\Dg, \Z) = \Htwo((\Gamma_\ell/ \Ng\Z), \Z)$. With Proposition \ref{prop:seqExact} we have:
 \[
 \begin{tikzcd}
 \BilZ \arrow{r}  & \Htwo(\Dg, \Z) \arrow{r} & \Htwo(\Gamma_\ell, \Z)  \arrow{r} & {0.}
 \end{tikzcd}
\]
Now, the transgression $\trg$ is defined as the map for which the sequence 
\begin{equation} \label{eq:deftrg}
\begin{tikzcd}
 {\Z} \arrow{r}{\trg} & { \Htwo(\Dg, \Z)  } \arrow{r}  &{\Htwo(\Gamma_\ell, \Z)} \arrow{r} & 0
\end{tikzcd}
\end{equation}
becomes exact.  Thus, the kernel 
of the map into $\Htwo(\Gamma_\ell, \Z)$ is generated by the image 
of the identity map $\mathbf{1}: \Z \rightarrow \Z$ under $\trg$. 
The image $\trg(\mathbf{1})$ is represented by a coboundary 
$(t,t') \mapsto (\partial  u)([\cdot, t],  [\cdot,t'])$, 
with a one-cochain $u: \Gamma_\ell \rightarrow \Z$, which has to satisfy two conditions:
\begin{enumerate}
 \item $u([\Ng\,h,0])  = h$ for all $h \in \Z$ and 
 \item $(\partial u)([h,t],[h',t'])$ does not depend on $h$ or $h'$.
\end{enumerate}
A suitable $u$ is obtained by setting $u([\Ng h, t]) \vcentcolon = h$. We get 
\[
(\partial u)([h,t],[h',t'])  = [h,t]\, u\left([h',t']\right) - u([h,t][h',t']) 
 + u([h,t])  =  -\frac{1}{ \Ng}\frac{\Im\hlf{t'}{t}}{\abs{\sqrtD}}.
\]  
Hence, $\trg(\mathbf{1})$ is represented by the cocycle 
\[
  (t,t') \longmapsto  \frac{1}{\Ng} \frac{\Im\hlf{t}{t'}}{\abs{\sqrtD}}, 
\]
any integer multiple of which is then contained in the kernel. Thus, for any $B \in \BilZ$ the image 
$[B]$ is a torsion element precisely if it is linear dependent to $\trg(\mathbf{1})$ over $\Z$.
\end{proof}
The linear dependence condition in the Lemma can more conveniently be formulated thus: If $B \in \BilZ$,
the image is a torsion element if and only if there is a rational number $Q$, such that for all $t,t' \in \Dg$, the following equation holds:
\begin{equation}\label{eq:lindep_cond}
 B(t,t') - Q \frac{\Im\hlf{t}{t'}}{\abs{\sqrtD}} = 0 .
\end{equation}
Since $\Dg$ has full rank in $W_\fieldk$, by linear extension, equivalently, the equation holds for all $t,t' \in W$;  similarly for all $t,t' \in W_\C$. 
As an example for this, we give an application to hermitian forms.  
\begin{rmk}
Let $H$ be a $\fieldk$-valued hermitian form on $W_\fieldk$, and assume that $\frac{1}{\abs{\sqrtD}}\Im H \in \BilZ$.
Further assume that $H$ is linear in its left argument (otherwise,  invert the sign in the equation below).
Then,  the map $H \mapsto \frac{1}{\abs{\sqrtD}}\Im H$ defines a torsion element in $\Htwo(\Gamma_\ell, \Z)$ if and only if
the following equation holds for all $t,t' \in \Dg$:
\begin{equation}\label{eq:extorsHerm}
 H(t,t') + \frac{\operatorname{Tr}H}{n} \Im\hlf{t}{t'} = 0,  
\end{equation}
where the trace $\operatorname{Tr}$ is taken over a normalized orthogonal basis for $\hlfempty$. 
\end{rmk}
\begin{proof}
Taking the imaginary part of \eqref{eq:extorsHerm}, we see that indeed, if the equation holds,
$\Im H$ is a rational multiple of $\Im\hlfempty$ and thus $\frac{1}{\abs{\sqrtD}}\Im H$ defines a torsion element by  the Lemma. 

Conversely, assume that the image  is a torsion element  in $\Htwo(\Gamma_\ell, \Z)$. 
Then, by the Lemma, the form has to be linear dependent to 
 $\frac{1}{\abs{\sqrtD}}\Im\hlfempty$ and satisfies an equation of the form \eqref{eq:lindep_cond}. 
 Since by linear extension, the equation holds for all $t,t' \in W_\C$, we may replace $t$ by a purely imaginary multiple. 
The resulting equation, equivalent to the first, is the following: 
 \[
  \Re H(t,t')  -  Q\cdot \Re \hlf{t}{t'}= 0, 
 \]
 valid for all $t,t' \in W_\C$. Whence by linear combination of the two equations, we find  $H(t, t') = Q\hlf{t}{t'}$ for all $t,t' \in W_\C$.
To determine the factor of proportionality $Q$, we take the trace.  With 
$\operatorname{Tr}\hlfempty\mid_{W_\C} = - n$, 
we get $Q = -\frac{1}{n}{\operatorname{Tr}H}$.
 \end{proof}

\section{Local Heegner divisors and Borcherds products} \label{sec:heegner}
Our main interest here is to study the contribution of Heegner divisors to the local Picard group. 
For this purpose, we will introduce local Borcherds products and, with their help, calculate the Chern classes of local Heegner divisors in $\Htwo(\Gamma_\ell, \Z)$.  
Then, we apply the cohomological results from section \ref{sec:cohom}.  

The local Picard group $\Pic(X_\Gamma, \ell)$ is defined as  the direct limit of the Picard groups on the regular loci (in the Baily-Borel compactification $X_{\Gamma, BB}^*$) of the open neighborhoods $\Ueps(\ell)$ of the cusp attached to $\ell$: 
\begin{equation}
\label{eq:defPicLoc}
\Pic(X_\Gamma, \ell) = \varinjlim \Pic\bigl( \Ueps^{reg}\bigr).
\end{equation}
We can describe this local Picard group through the direct system $\Pic(\Gamma_\ell \backslash \Ueps(\ell))$ up to torsion, as $\Gamma_\ell$ has finite index in the stabilizer of the cusp, $\operatorname{Stab}_\Gamma(\ell)$.  As the quotient $\operatorname{Stab}_\Gamma(\ell)/ \Gamma_\ell$ operates on the direct limit $\varinjlim \Pic(\Gamma_\ell \backslash \Ueps(\ell))$, for the invariant part, one has
\begin{equation}\label{eq:torEpstorLoc}
\Pic(X_\Gamma, \ell) \otimes \Q = \left(\varinjlim \Pic(\Gamma_\ell \backslash \Ueps(\ell)) \otimes \Q \right)^{\operatorname{Stab}_\Gamma(\ell) / \Gamma_\ell}. 
\end{equation}
Thus, to describe the position of a local divisor up to torsion, it suffices to work  with the Picard group  $\Pic(\Gamma_\ell \backslash \Ueps(\ell))$ for a fixed (sufficiently small) $\epsilon>0$. 
\begin{rmk}\label{rmk:toroidal_Pic}
Replacing the Baily-Borel compactification with the toroidal compactification $X_{\Gamma, tor}^*$, 
the system of open neighborhoods $\Ueps$ is replaced by  the system of open neighborhoods  $\widetilde{V_\epsilon}(\ell)$  with the operation of $\Gamma_\ell / \Gamma_{\ell, T}$, and one can look at the Picard groups $\Pic\bigl(\Gamma_\ell / \Gamma_{\ell, T}\backslash \widetilde{V_\epsilon}(\ell)\bigr)$. 
The main difference here is, that the divisor of $\{ q_\ell = 0 \}$ is now 
a non-trivial element of the Picard group. A function with this divisor is given by $q_\ell = e(\Ng^{-1}\tau)$. Note that the Chern class of $\{ q_\ell =0 \}$ is precisely $\Ng^{-1}\frac{\Im\hlf{t}{t'}}{\abs{\sqrtD}}$.  
\end{rmk}

\subsection{Local Heegner divisors}\label{subsec:heegner}
First, we recall the usual definition of Heegner divisors on $\HU$ \citep[cf.][Section 6]{Hof14},  
and introduce local Heegner divisors in the neighborhoods 
$U_\epsilon(\ell)$ of the cusp $[\ell]$.  

Let $\lambda\in L'$ be  a lattice vector of negative norm, i.e.\ $\hlf{\lambda}{\lambda}<0$. 
The (primitive) \emph{Heegner divisor} $\HeegU(\lambda)$ attached to $\lambda$ is a divisor on 
$\HU$ given by 
\[
\begin{aligned}
\HeegU(\lambda) & \vcentcolon =
 \left\{ (\tau, \sigma) \in \HU\,;\, \hlf{\lambda}{z(\tau,\sigma)} = 0 \right\}, \\
\end{aligned}
\] 
with $z(\tau, \sigma) = \ell' - \tau\sqrtD\hlf{\ell}{\ell'}\ell + \sigma$ (see section  
\ref{subsec:symm_domain}). 
Clearly, the divisor $\HeegU(\lambda)$ intersects 
$U_\epsilon(\ell)$ for every $\epsilon>0$, if and only if $\hlf{\lambda}{\ell} = 0$. 
In the following, we denote by $\ell^\perp$  the (orthogonal) complement of $\ell$ with respect to $\hlfempty$.

Thus, let $\lambda \in L' \cap  \ell^\perp$. Then, $\lambda = \lambda_\ell \ell + \lambda_D$ with $\lambda_D 
\in W_\fieldk$ and $\HeegU(\lambda)$ is given by an equation of the form 
\[
\lambda_\ell \hlf{\ell}{\ell'} + \hlf{\lambda_D}{\sigma} = 0. 
\]
Consider the orbit of $\lambda$ under $\Gamma_\ell$.
Since $\Gamma$ is a modular group, 
the Heisenberg group $\Gamma_\ell$ operates trivially on the discriminant group $L'/L$ and
thus $[h,t]\lambda \equiv \lambda\pmod{L}$ for all $[h,t] \in \Gamma_\ell$. 
Also, since $\lambda \in \ell^\perp$, it remains fixed under $[h,0]$  for all  $h\in\Ng\Z$,  and $\Gamma_{\ell, T}$ acts trivially. 

For an Eichler element $[0,t]$ with $t\in\Dg$, we have
\[
[0,t]\lambda = \lambda - \hlf{\lambda_D}{t}  \ell = 
(\lambda_\ell - \hlf{\lambda_D}{t}) \ell + \lambda_D.
\]
Thus, the orbit of $\lambda$ under $\Gamma_\ell/\Gamma_{\ell, T} \simeq \Dg$ is given by
$\lambda - \mathfrak{T}\ell$, where $\mathfrak{T}$ denotes the set 
\[
\mathfrak{T} = \mathfrak{T}(\lambda) \vcentcolon = 
\left\{ 
\hlf{\lambda}{t}\, ;\, t\in\Dg 
 \right\}. 
\]
Note that $\mathfrak{T} \subseteq \Diffk^{-1}$ (as a fractional ideal), since $\Dg \subseteq \Dltc$. 

Hence,  the group $\Gamma_\ell$ operates on the set $\lambda + \Diffk^{-1}\ell$ with only finitely many orbits and thus, the divisor
\begin{equation}\label{eq:defHinfty}
\HeegU_\infty(\lambda) \vcentcolon = \sum_{\alpha \in \Diffk^{-1}} 
\HeegU(\lambda + \alpha\ell)
\end{equation}
is invariant under $\Gamma_\ell$ and defines an element of 
$\operatorname{Div}{\left( \Gamma_\ell \backslash U_\epsilon(\ell)\right)}$.

\paragraph{Heegner divisors with index} 
Now, let $\beta \in L'/L$ be an element of the discriminant group and  
 $m$ a negative integer. Then, the Heegner divisor of index $(\beta, m)$, 
defined as the (locally finite) sum
\begin{equation}\label{eq:HeegUglob}
 \HeegU(\beta, m) = \sum_{\substack{\lambda \in L' \\ \QfNop(\lambda) = m \\ \lambda + 
L = \beta}} \HeegU(\lambda), 
\end{equation}
is a $\Gamma$-invariant divisor on $\HU$. Under the canonical projection 
$\HeegU(\beta, m)$ is the inverse image of the a divisor on $X_{\Gamma}$. 
Also note that $\HeegU(\beta, m) = \HeegU(-\beta,m)$. 

Through the open immersion $\Gamma_\ell \backslash U_\epsilon(\ell) \hookrightarrow 
\Gamma\backslash \HU = X_\Gamma$ from section \ref{subsec:cmpct}, the inclusion $U_\epsilon(\ell) 
\subset \HU$ and the projection maps, we get a commutative diagram
\[
\begin{tikzcd}
 {\operatorname{Div}(X_\Gamma)} \arrow{r} \arrow{d} & { \operatorname{Div}(\Gamma_\ell \backslash 
U_\epsilon(\ell)) } \arrow{d} \\
{\operatorname{Div}(\HU)} \arrow{r} & { \operatorname{Div}(U_\epsilon(\ell))\;. }  
\end{tikzcd}
\]
We denote by $\HeegU_\ell(\beta, m)$  the image in $\operatorname{Div}(\Gamma_\ell \backslash 
U_\epsilon(\ell))$ of the divisor $\HeegU(\beta, m) \in \operatorname{Div}(X_\Gamma)$. 
The corresponding $\Gamma_\ell$-invariant divisor in  $\operatorname{Div}(U_\epsilon(\ell))$ is 
also denoted by $\HeegU_\ell(\beta, m)$. 

For sufficiently small $\epsilon$, the  divisor $\HeegU_\ell(\beta, m)$ is given by the restriction 
 to $U_\epsilon(\ell)$ of the sum on the right hand-side of \eqref{eq:HeegUglob}. 
Then, only $\lambda$'s perpendicular to $\ell$ contribute. 
In particular, if $\HeegU_\ell(\beta, m)$ is non-zero, then $\beta$ is contained 
in the subgroup 
\[
\Lcal \vcentcolon = \left\{ 
\gamma  \in  L'/L\,;\; 2\Re\hlf{\gamma}{\ell} \equiv 0 
\bmod{M_1}\quad\text{and}\quad \abs{\sqrtD}\Im\hlf{\gamma}{\ell} \equiv 0 
\bmod{M_2} \right\} \subseteq L'/L, 
\]
where $M_1$, $M_2$ are the unique integers given by 
$2\Re\hlf{L}{\ell} = M_1 \Z$ and by $\abs{\sqrtD}\Im\hlf{L}{\ell} = 
M_2\Z$. 

With $\beta \in \Lcal$ the local divisor $\HeegU_\ell(\beta, m)$ can be written in the form
\begin{equation}\label{eq:HeegUell_bm}
\begin{aligned}
\HeegU_\ell(\beta, m) 
 = \sum_{\substack{ \kappa \in  \Dltc \\ \QfNop(\kappa + \dot\beta) = m}}\HeegU_\infty(\kappa + 
\dot\beta).  
\end{aligned}
\end{equation}
Here, we adopt the notation of \cite{BrFr} section 4, by which $\dot\beta$ denotes a 
representative of $\beta$ with $\dot\beta \in L' \cap \ell^\perp$, fixed once and for all for 
every $\beta \in \Lcal$. 
Note that a surjective homomorphism is given by 
\[
\pi: \Lcal \longrightarrow \Dltc'/\Dltc, \quad \beta \longmapsto \dot\beta_D,
\]
where $\dot\beta_D$ denotes the definite part of $\dot\beta$.

\subsection{Local Borcherds products} 
\label{subsec:localbp} 
In this section, our aim is to use local Borcherds products to describe the 
position of Heegner divisors in the cohomology. Given a lattice 
vector $\lambda$ of negative norm with $\lambda \in L'\cap \ell^\perp$ 
we can realize the local Heegner divisor attached to $\lambda$ through an infinite product with 
factors of the form $\left(1 - e\left(\hlf{z}{[0,t]\lambda}\right)\right)$ with $[0,t] \in \Gamma_{\ell,T}$.  

If for the Heegner divisor $\HeegU_\infty(\lambda)$ as in \eqref{eq:defHinfty}, we set
\[
\prod_{\beta \in \Diffk^{-1}}
\bigl[ 1 - e\left( \sigma(\beta) \hlf{z}{\lambda - \beta\ell} \right)\bigr], 
\qquad \text{with}\quad \sigma(\beta) \in \{\pm 1 \},
\]
we get an infinite product with (zero-)divisor $\HeegU(\infty)$. 
For $\sigma(\beta) \equiv 1$ the product would be  $\Gamma_\ell$-invariant. 
However, to assure absolute convergence, we must define  the sign $\sigma(\beta)$ depending on $\Im\beta$. 
Then, the product is no longer fully invariant. Instead, the operation of Eichler 
transformations gives rise to a non-trivial automorphy factor, which we will use to determine the 
position of $\HeegU_\infty(\lambda)$ in the local Picard group.
\begin{assumption*}From here on, we shall require that $\hlf{\ell}{\ell'} = \sqrtD^{-1}$.  
\end{assumption*}
We remark that this is not a particularly serious restriction, as under the assumptions concerning $\ell$ and $\ell'$ from section \ref{subsec:ltcs}, it is always possible to choose $\ell'$ suitably.

Now, keeping in mind that  $\Diffk = \sqrtD^{-1}\OK$ and $\overline{\mathcal{O}}_\fieldk = \OK = - \OK$, we define  the local Borcherds products  as follows:
\begin{definition}\label{def:localBp}
 Let $\lambda \in L'$ be a negative norm lattice vector in the orthogonal complement of $\ell$. The local 
Borcherds product $\Psi_\lambda(z)$ attached to $\HeegU_\infty(\lambda)$ is defined as
\[
\Psi_\lambda(z) \vcentcolon=  \prod_{\alpha \in \OK} 
\left[1 - e\left(\sigma(\Im\alpha)\left( 
\hlf{z}{\lambda} + \frac{\alpha}{\abs{\Disc}}  \right)
\right) \right],
\]
with a sign $\sigma(\Im\alpha)$ defined as follows:
\[
\sigma(\Im\alpha) = \begin{cases}
\phantom{-}1 & \quad \text{if $\Im \alpha \geq 0$,} \\
-1 & \quad\text{otherwise.}
\end{cases} 
\]
\end{definition}
Clearly, $\Psi_\lambda(z)$ is an absolutely convergent infinite product with divisor $\HeegU_\infty(\lambda)$.
With $\Diffk^{-1} = \sqrtD^{-1}\left( \Z + \zeta\Z\right)$, where $\Im \zeta = \frac12 \sqrtD$ and $2\Re\zeta \equiv \Disc \pmod{4}$, 
we can write $\Psi_\lambda(z)$ in the following form
\[
\Psi_\lambda(z)  =
\prod_{\substack{p\bmod{ \abs{\Disc}} \\ q \in \Z}}
\left[1 - e\left(\sigma(q)\left( 
\hlf{z}{\lambda} + \frac{1}{\abs{\Disc}} \left( p + \zeta\, q \right)\right)  
\right) \right],
\]
with $\sigma(q) = \operatorname{sign}(q)$ if $q\neq 0$ and $\sigma(0) = +1$. 

Note that $\Psi_\lambda$ is invariant under translations in $\Gamma_{\ell, T}$,
while the operation of Eichler transformations, $[0,t]$ with $t\in \Dg$,  gives rise to 
the (non-trivial) automorphy factor  
\begin{equation}\label{eq:J_def}
J_\lambda([h,t], z) = \frac{\Psi_\lambda([0,t]z)}{\Psi_\lambda(z)} \quad \left([h,t] \in 
\Gamma_\ell\right).
\end{equation}
\begin{proposition}\label{prop:J_lambda}
 The automorphy factor $J_\lambda$ attached to $\HeegU_\infty(\lambda)$ takes the form
 \[
J_\lambda([h,t], z) = 
e\left( -2\abs{\Disc}\hlf{z}{\lambda}\Re\hlf{t}{\lambda} - 2\left(\Re\hlf{t}{\lambda}\right)^2\zeta + \Re\hlf{t}{\lambda}\left( \zeta +1\right) \right),
 \]
with $\zeta$ such that $\OK = Z + \zeta\Z$.  Note that $J_\lambda$ is independent of the choice of $\zeta$.
 \end{proposition}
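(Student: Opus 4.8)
The plan is to reduce the infinite product $\Psi_\lambda([0,t]z)/\Psi_\lambda(z)$ to an explicit \emph{finite} product by exploiting that $[0,t]$ is an isometry, and then to track precisely the finitely many factors on which the convergence sign $\sigma$ is forced to flip. First note that since $\Psi_\lambda$ is invariant under the central translations $[h,0]\in\Gamma_{\ell, T}$ (Definition \ref{def:localBp}), the quantity $J_\lambda([h,t],z)$ depends only on $t$, so it suffices to analyze the Eichler transformation $[0,t]$.

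The first step is to compute how $\hlf{z}{\lambda}$ transforms. Because $[0,t]\in\SU(V)$ is an isometry with inverse $[0,-t]$, we have $\hlf{[0,t]z}{\lambda}=\hlf{z}{[0,-t]\lambda}$, and since $\lambda\in\ell^\perp$ the formula \eqref{def:eichlerU} collapses to $[0,-t]\lambda=\lambda+\hlf{\lambda}{t}\ell$. Using $\hlf{z}{\ell}=\hlf{\ell'}{\ell}$ (as $\ell$ is isotropic and $\sigma\perp\ell$) together with the standing assumption $\hlf{\ell}{\ell'}=\sqrtD^{-1}$, i.e.\ $\hlf{\ell'}{\ell}=-\sqrtD^{-1}$, I obtain
\[
\hlf{[0,t]z}{\lambda}=\hlf{z}{\lambda}+\frac{\nu}{\abs{\Disc}}, \qquad \nu:=\sqrtD\,\hlf{t}{\lambda}\in\OK,
\]
where $\nu\in\OK$ follows from $\hlf{t}{\lambda}\in\Diffk^{-1}=\sqrtD^{-1}\OK$. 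The numerical input needed later is that $\Im\nu=\abs{\sqrtD}\,\Re\hlf{t}{\lambda}$, so the $\zeta$-coordinate $b$ of $\nu=a+b\zeta$ satisfies $b=2\Re\hlf{t}{\lambda}$.

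Next I would substitute this shift into the product and reindex $\alpha\mapsto\alpha+\nu$, so that each numerator factor acquires the argument $\hlf{z}{\lambda}+\alpha/\abs{\Disc}$ of the matching denominator factor but with sign $\sigma(\Im\alpha-\Im\nu)$ in place of $\sigma(\Im\alpha)$. In the $(p,q)$-parametrization ($p\bmod\abs{\Disc}$, $q\in\Z$), where $\sigma$ reads off $\operatorname{sign}(q)$, every factor with $\operatorname{sign}(q)=\operatorname{sign}(q-b)$ cancels against its denominator, leaving only the finite strip $0\le q<b$ (resp.\ $b\le q<0$ when $b<0$), for each residue $p\bmod\abs{\Disc}$. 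On each surviving factor the sign genuinely flips, and I would apply the elementary identity $\tfrac{1-e(-x)}{1-e(x)}=-e(-x)$ to rewrite it as $-e\!\bigl(-(\hlf{z}{\lambda}+\tfrac{\alpha}{\abs{\Disc}})\bigr)$.

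The remaining and most delicate step is to evaluate the resulting finite product $\prod(-1)\,e(-(\cdots))$. Collecting the signs gives $(-1)^{|b|\abs{\Disc}}$, while summing the arguments amounts to two arithmetic progressions: the $\hlf{z}{\lambda}$ terms sum to $-2\abs{\Disc}\,\Re\hlf{t}{\lambda}\,\hlf{z}{\lambda}$ (using the count $|b|\abs{\Disc}$ and $b=2\Re\hlf{t}{\lambda}$); summing $p$ over a full residue system produces a half-integer $-\tfrac{b\abs{\Disc}}{2}$ which, inside $e(\cdot)$, is exactly $(-1)^{b\abs{\Disc}}$ and cancels the collected sign, plus the term $\Re\hlf{t}{\lambda}$; and summing $q$ over the strip yields the quadratic term $-2(\Re\hlf{t}{\lambda})^2\zeta$ together with $\zeta\,\Re\hlf{t}{\lambda}$, the last two linear pieces combining to $\Re\hlf{t}{\lambda}(\zeta+1)$. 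The main obstacle is precisely this bookkeeping: fixing the strip endpoints under the convention $\sigma(0)=+1$, handling both signs of $b$ uniformly, and reducing the progression sums modulo $\Z$ so that the sign $(-1)^{|b|\abs{\Disc}}$ cancels against the $p$-summation and everything collapses to the stated exponential. Finally, independence of the choice of $\zeta$ follows because replacing $\zeta$ by $\zeta+m$ ($m\in\Z$) changes the exponent only by $\tfrac{bm(1-b)}{2}\in\Z$, leaving $J_\lambda$ unchanged.
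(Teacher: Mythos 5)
Your proposal is correct and follows essentially the same route as the paper's proof: compute the shift $\hlf{[0,t]z}{\lambda}=\hlf{z}{\lambda}+\nu/\abs{\Disc}$ with $\nu\in\OK$ and $b=2\Re\hlf{t}{\lambda}$, reindex so that all but the finite strip $0\le q<b$ (resp.\ $b\le q<0$) cancels, apply $\tfrac{1-e(-x)}{1-e(x)}=-e(-x)$, and sum the resulting arithmetic progressions in $p$ and $q$. The only cosmetic difference is that you transport the Eichler transformation onto $\lambda$ via the inverse isometry rather than acting directly on $z$, and your $\zeta$-independence argument (the exponent shifts by $\tfrac{b(1-b)}{2}m\in\Z$) is a clean variant of the paper's quarter-integer remark.
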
  
\begin{proof}
Since $\hlf{\ell'}{\ell} =-{\sqrtD}^{-1}$, by \eqref{def:eichlerU}  we have 
$\hlf{[0,t]z}{\lambda} = \hlf{z}{\lambda} - {\sqrtD}^{-1} \hlf{t}{\lambda}$. 
Since $\hlf{t}{\lambda} = \hlf{t}{\lambda_D} \in \Diffk^{-1}$  we can write
\[
\hlf{[0,t]z}{\lambda} = \hlf{z}{\lambda}  + \frac{1}{\abs{\Disc}}(r +  \zeta s), \qquad\text{with}\; r,s\in\Z. 
\]
We note that $s = 2\Re\hlf{t}{\lambda}$.  
Now, after permuting representatives modulo $\abs{\Disc}$ 
and a shift in the index $q$, the automorphy factor from \eqref{eq:J_def} takes the form
\begin{equation}\label{eq:Jintermed}
   J_\lambda([h,t], z) = 
 \prod_{ p \bmod{\abs{\Disc}}}\prod_{q \in \Z}
 \frac{1 - e\left( \sigma(q-s)\left( 
 \hlf{z}{\lambda} + \abs{\Disc}^{-1} \left( p  + q \zeta \right) 
 \right)  \right)}%
 {1 - e\left( \sigma(q)\left( 
 \hlf{z}{\lambda} + \abs{\Disc}^{-1} \left( p  + q\zeta \right) \right)
 \right)}.
\end{equation}
Only factors with $\sigma(q-s) \neq \sigma(q)$ contribute to the 
 product. There are two cases:  Either we have $s > q \geq 0$, or $s \leq q < 0$. 
 We examine the first case. By applying the elementary identity 
 \[
\frac{1 - e(-z)}{1-e(z)}  = -e(-z).
 \]
 we get
\[
\begin{aligned}
J_\lambda([h,t], z)  
 & = \prod_{ p \bmod{\abs{\Disc}}}\prod_{0 \leq q < s}
 - e\left(  - \hlf{z}{\lambda} -  \frac{1}{\abs{\Disc}} \left( p  + q \zeta \right) 
  \right) \\
& = \prod_{ p \bmod{\abs{\Disc}}} (-1)^s
e\left( -s\hlf{z}{\lambda} - \frac{s}{\abs{\Disc}}\left( p +\frac{s-1}{2} \zeta\right)\right) \\
& = e\left( - s\abs{\Disc}\hlf{z}{\lambda} -   \frac{s(s-1)}{2} \zeta - \frac{s(\abs{\Disc} - 1)}{2}  + \frac{s\abs{\Disc}}{2} \right)\\
& = e\left( -s\abs{\Disc}\hlf{z}{\lambda} - \frac{s^2}{2}\zeta + \frac{s}{2}\zeta +\frac{s}{2} \right) \\
\end{aligned} 
\]
Hence, recalling that $s  = 2 \Re\hlf{t}{\lambda}$, we have
\begin{equation}\label{eq:thecocyclJ}
J_\lambda([h,t], z) = 
e\left(
-2\abs{\Disc}\hlf{z}{\lambda}\Re\hlf{t}{\lambda}
- 2\left(\Re\hlf{t}{\lambda}\right)^2\zeta 
+ \Re\hlf{t}{\lambda} \zeta + \Re\hlf{t}{\lambda} \right).
\end{equation}
We remark that the last term is  determined only up to sign, since $2\Re\hlf{t}{\lambda} \in \Z$.
Finally, we  note that as  the second term in \eqref{eq:thecocyclJ}
is a quarter-integer while $2\Re\zeta$ is only determined modulo $4$,
the automorphy factor is independent of the choice for $\Re\zeta$. 

The second case ($s\leq q < 0$) can be treated similarly, yielding the same result for the 
automorphy factor $J_\lambda([h,t], z)$. 
\end{proof}

\subsection{The Chern class of a Heegner divisor \texorpdfstring{$\HeegU_\infty(\lambda)$}{}}\label{subsec:chern}
From the automorphy factor $J_\lambda$ we now determine a two-cocycle representing the Chern class of 
the Heegner divisor $\HeegU_\infty(\lambda)$. 

\begin{proposition} \label{prop:ChernClassHl} 
The Chern class $\delta(\HeegU_\infty(\lambda))$ of the local Heegner divisor $\HeegU_\infty(\lambda)$ in $\Htwo(\Gamma_\ell, \Z)$ is determined by the cocycle
\[
[c_\lambda]: \quad ([h,t][h',t']) \longmapsto - 2\abs{\sqrtD} \Re\hlf{t}{\lambda}\Im\hlf{t'}{\lambda} 
= \Im\left( -\abs{\sqrtD} F_\lambda(t,t') \right),  
\]
where $F_\lambda(t,t') \vcentcolon = 2\Re\hlf{t}{\lambda} \hlf{t'}{\lambda}$.
\end{proposition}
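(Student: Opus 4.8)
The plan is to pass from the multiplicative automorphy factor $J_\lambda$ of Proposition \ref{prop:J_lambda} to an additive two-cocycle in $\CoC^2(\Gamma_\ell, \Z)$ representing the image of $\HeegU_\infty(\lambda)$ under the connecting map $\delta$ in the exact sequence \eqref{eq:seqH2ZtoOps}. Recall that the Chern class of a $\Gamma_\ell$-line bundle given by an automorphy factor $J$ valued in $\Oeps^*$ is computed by choosing a branch of the logarithm, $\frac{1}{2\pi i}\log J([h,t], z)$, which is a one-cochain valued in $\Oeps$, and then applying the coboundary operator; the result lands in $\Z$ and represents $\delta$ of the divisor. Since $J_\lambda$ is already written as $e(\,\cdot\,)$, reading off the argument directly supplies the one-cochain: set
\[
\mu([h,t], z) \vcentcolon= -2\abs{\Disc}\hlf{z}{\lambda}\Re\hlf{t}{\lambda} - 2\left(\Re\hlf{t}{\lambda}\right)^2\zeta + \Re\hlf{t}{\lambda}\left(\zeta + 1\right),
\]
so that $J_\lambda = e(\mu)$.

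First I would verify that $J_\lambda$ satisfies the cocycle relation $J_\lambda([h,t][h',t'], z) = J_\lambda([h,t], [0,t']z)\,J_\lambda([h',t'], z)$ (forced by its definition as a ratio $\Psi_\lambda([0,t]z)/\Psi_\lambda(z)$), which guarantees that the additive coboundary $\partial\mu$ is $\Z$-valued. Then I would compute $\partial\mu$ explicitly. Using the action \eqref{def:eichlerU} we have $\hlf{[0,t']z}{\lambda} = \hlf{z}{\lambda} - \sqrtD^{-1}\hlf{t'}{\lambda}$, so the only $z$-dependent term, $-2\abs{\Disc}\hlf{z}{\lambda}\Re\hlf{t}{\lambda}$, is the one whose shift under the group action produces a nonvanishing coboundary contribution; the remaining terms in $\mu$ depend only on $t$ and so contribute nothing beyond their additivity defect. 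Tracking the surviving piece,
\[
(\partial\mu)([h,t],[h',t']) = 2\abs{\Disc}\,\sqrtD^{-1}\hlf{t'}{\lambda}\,\Re\hlf{t}{\lambda},
\]
and using $\abs{\Disc}\sqrtD^{-1} = -\abs{\sqrtD}\cdot(\sqrtD/\abs{\sqrtD})\cdot\overline{(\sqrtD/\abs{\sqrtD})}\cdots$ — more concretely, since $\sqrtD$ is purely imaginary one has $\abs{\Disc}/\sqrtD = -\abs{\sqrtD}\cdot i\,\mathrm{sign}$ — the factor $\sqrtD^{-1}\hlf{t'}{\lambda}$ picks out the imaginary part, yielding $-2\abs{\sqrtD}\Re\hlf{t}{\lambda}\Im\hlf{t'}{\lambda}$. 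The final step is to identify this with $\Im(-\abs{\sqrtD}F_\lambda(t,t'))$ for $F_\lambda(t,t') = 2\Re\hlf{t}{\lambda}\hlf{t'}{\lambda}$, which is immediate since $\Re\hlf{t}{\lambda}$ is real and $\Im(2\Re\hlf{t}{\lambda}\hlf{t'}{\lambda}) = 2\Re\hlf{t}{\lambda}\Im\hlf{t'}{\lambda}$.

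The main obstacle I anticipate is bookkeeping of the purely imaginary factor $\sqrtD$ and its absolute value when converting $\abs{\Disc}\sqrtD^{-1}\hlf{t'}{\lambda}$ into a real multiple of an imaginary part; getting the sign and the $\abs{\sqrtD}$ normalization exactly right requires care, especially because $\hlf{t'}{\lambda} \in \Diffk^{-1} = \sqrtD^{-1}\OK$ interacts with the $\sqrtD^{-1}$ coming from the Eichler transformation. A secondary subtlety is justifying that the half-integer and $\zeta$-dependent terms in $\mu$, which make $\mu$ itself only defined modulo the ambiguities noted after Proposition \ref{prop:J_lambda}, contribute nothing to the \emph{cohomology class}: their coboundary defect is $\Z$-valued and constant in $z$, hence represents the trivial class in $\Htwo(\Gamma_\ell, \Z)$ (or can be absorbed into the choice of logarithm branch), so only the genuinely $z$-coupled term governs the Chern class. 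Once these two points are handled, the identification with $[c_\lambda]$ follows directly.
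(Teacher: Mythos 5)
Your overall strategy is exactly the paper's: write $J_\lambda = e(A)$ with $A$ the argument read off from Proposition \ref{prop:J_lambda}, apply the inhomogeneous coboundary $c(g,g') = A(gg',z) - A(g,g'z) - A(g',z)$ to realize the connecting homomorphism $\delta$, and evaluate on Eichler transformations. However, there is a genuine gap in your bookkeeping. You claim that only the $z$-coupled term $-2\abs{\Disc}\hlf{z}{\lambda}\Re\hlf{t}{\lambda}$ produces a surviving contribution and that ``the factor $\sqrtD^{-1}\hlf{t'}{\lambda}$ picks out the imaginary part.'' It does not: that term alone contributes
\[
2\sqrtD\,\Re\hlf{t}{\lambda}\hlf{t'}{\lambda} \;=\; -2\abs{\sqrtD}\Re\hlf{t}{\lambda}\Im\hlf{t'}{\lambda} \;+\; 2\sqrtD\,\Re\hlf{t}{\lambda}\Re\hlf{t'}{\lambda},
\]
and the second summand is purely imaginary and nonzero in general, so by itself this is not even a real-valued (let alone $\Z$-valued) cocycle. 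The missing ingredient is the coboundary of the quadratic term $-2\left(\Re\hlf{t}{\lambda}\right)^2\zeta$, which is \emph{not} negligible: its additivity defect is $-4\Re\hlf{t}{\lambda}\Re\hlf{t'}{\lambda}\zeta$, whose imaginary part (using $\Im\zeta = \tfrac12\abs{\sqrtD}$, i.e.\ $-2\sqrtD\Re\hlf{t}{\lambda}\Re\hlf{t'}{\lambda}$) exactly cancels the unwanted summand above. Only after this cancellation does one obtain $-2\abs{\sqrtD}\Re\hlf{t}{\lambda}\Im\hlf{t'}{\lambda}$ plus the real term $-4\Re\hlf{t}{\lambda}\Re\hlf{t'}{\lambda}\Re\zeta$.

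Your secondary argument --- that the $\zeta$-dependent defects are ``$\Z$-valued and constant in $z$, hence represent the trivial class'' --- is also not correct as stated: the action of $\Gamma_\ell$ on $\Z$ is trivial, so constant cocycles are precisely the nontrivial content of $\Htwo(\Gamma_\ell,\Z)$, and the defect $-4\Re\hlf{t}{\lambda}\Re\hlf{t'}{\lambda}\zeta$ is not integer-valued. The correct disposal of the leftover term $-4\Re\hlf{t}{\lambda}\Re\hlf{t'}{\lambda}\Re\zeta$ is the paper's observation that $J_\lambda$ is independent of the choice of $\Re\zeta$ (which is only determined modulo a congruence), so this term can contribute at most a torsion class and may be ignored. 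With the quadratic term's coboundary restored and this torsion argument supplied, your computation closes and agrees with the paper's.
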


\begin{proof}
To calculate the Chern class, we must realize the connecting homomorphism $\delta: \KH^{1}(\Gamma_\ell, \Oeps^*) \rightarrow \KH^{2}(\Gamma_\ell, \Z)$.
Thus, let $A(g,z)$ be a holomorphic function satisfying $ J_\lambda (g,z) = e\left(A(g,z)\right)$ and
set 
\begin{equation}\label{eq:infcocycl}
c(g,g') = A(gg', z) - A(g,g'z) - A(g', z) \quad\text{for all $g,g' \in \Gamma_\ell$}.
\end{equation}
Then, the two-cocycle defined by the map $(g,g) \mapsto c(g,g')$ is a representative for the Chern class in $\KH^2(\Gamma_\ell, \Z)$. 
Note that while $A(g,g')$ is not uniquely determined, $c(g,g')$ is independent of this choice; also, multiplying $J_\lambda$ with a trivial automorphy factor 
changes $c(g,g')$ only by a coboundary. 

Clearly, it suffices to  calculate $c(g,g')$ for Eichler transformations $g = [0,t]$ and $g' = [0,t']$. 
From \eqref{eq:infcocycl} we see that the last two terms in \eqref{eq:thecocyclJ}, being linear in $t$, cancel. We calculate 
\begin{align*}
 A([0, t+t'], z) &  - A([0,t],[0,t']z) - A([0, t'], z) =\\
  = &\; 2\abs\Disc \hlf{[0,t'] z - z}{\lambda} \Re\hlf{t}{\lambda} - 4\Re\hlf{t}{\lambda}\Re\hlf{t'}{\lambda}\zeta \\
 = &\; 2\sqrtD\Re\hlf{t}{\lambda} \hlf{t'}{\lambda} - 2\Re\hlf{t}{\lambda}\Re\hlf{t'}{\lambda} \sqrtD - 4\Re\hlf{t}{\lambda}\Re\hlf{t'}{\lambda}\Re\zeta \\
 = &\, -2\abs\sqrtD\Re\hlf{t}{\lambda}\Im\hlf{t'}{\lambda} - 4\Re\hlf{t}{\lambda}\Re\hlf{t'}{\lambda}\Re\zeta, 
\end{align*}
since $\Im\zeta =\frac12 \abs\sqrtD$.  
Now, consider the second term. We know $J_\lambda$ doesn't  depend on  the choice of $\Re\zeta$, thus
this term contributes at most a torsion element in the cohomology or vanishes entirely.  It can hence be ignored. 

Note also that the remaining first term is an integer for all $[0,t], [0,t'] \in \Gamma_{\ell, T}$.
\end{proof}
It is worth noting that the bilinear form $F_\lambda(\cdot, \cdot)$ introduced in Proposition \ref{prop:ChernClassHl} can be written in the form
\[
F_\lambda(a,b) = \Re\hlf{a}{\lambda} \hlf{b}{\lambda} = \hlf{b}{\lambda} \hlf{a}{\lambda} + \hlf{b}{\lambda} \hlf{\lambda}{a} \qquad(a,b \in W_\fieldk).
\]
Clearly, the first term is a complex bilinear form, while the second term is a hermitian form, we denote them by $B_\lambda(a,b)$ and $H_\lambda(a,b)$, respectively. Note that $H_\lambda(a,b)$ is linear in its \emph{second} argument. Further, we remark that $F_\lambda(a,b) = F_{\lambda_D}(a,b)$ for all $a, b \in W_\fieldk$.

\subsection{Torsion criteria for Heegner divisors}
Up to here, we have only worked on  Heegner divisors attached to individual lattice 
vectors, i.e.\ $\HeegU_\infty(\lambda)$, for $\lambda \in L'$ with 
$\Qf{\lambda}< 0$. 
Next, we consider linear combinations of Heegner divisors. We will be 
mainly interested in the Heegner divisors $\HeegU_\ell(\beta, m)$. 

For general linear combinations of  Heegner divisors, we have the following Lemma: 
\begin{lemma}\label{lemma:tors_lcH}  
Let $\HeegU$ be a finite linear combination of Heegner divisors of the form
 \[
 \HeegU = \sum_{\substack{\lambda \in L'\cap \ell^\perp \\ \QfNop(\lambda) < 0}} a(\lambda) \HeegU_\infty(\lambda), \qquad \left( a(\lambda) \in \Z \quad \text{for every $\lambda$}\right). 
 \]
Then, the Chern class $\delta(\HeegU)$ of $\HeegU$ is a a torsion element in $\Htwo(\Gamma_\ell, \Z)$
if and only if for all $t,t' \in \Dg$ the following equation holds
\[
 \sum_{\substack{\lambda \in L'\cap \ell^\perp \\ \QfNop(\lambda) < 0}} a(\lambda) 
 \left[ F_\lambda(t,t') - \frac{\hlf{\lambda}{\lambda}}{n} \hlf{t'}{t} \right] = 0.
\]
\end{lemma}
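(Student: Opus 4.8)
**

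The plan is to**reduce the torsion criterion for the Chern class of a linear combination of Heegner divisors to the linear-dependence condition established in Lemma~\ref{lemma:torsCrit}. The main point is that Proposition~\ref{prop:ChernClassHl} gives us an explicit cocycle representative for each individual Chern class $\delta(\HeegU_\infty(\lambda))$, namely the image in $\Htwo(\Gamma_\ell, \Z)$ of the bilinear form $\Im(-\abs{\sqrtD} F_\lambda)$. Since the Chern class map $\delta$ is a group homomorphism and the linear combination $\HeegU$ has integer coefficients $a(\lambda)$, I would first observe that the Chern class $\delta(\HeegU)$ is represented by the bilinear form

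\[
c = \sum_{\substack{\lambda \in L'\cap \ell^\perp \\ \QfNop(\lambda) < 0}} a(\lambda)\, \Im\!\left( -\abs{\sqrtD} F_\lambda \right),
\]

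which lies in $\BilZ$ because each summand does (being $\Z$-valued on $\Dg$, as noted at the end of the proof of Proposition~\ref{prop:ChernClassHl}) and $\BilZ$ is closed under integer linear combinations.

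\medskip

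\textbf{Applying the torsion criterion.} Having identified $\delta(\HeegU)$ with the class of $c \in \BilZ$, I would invoke Lemma~\ref{lemma:torsCrit}: the image of $c$ is a torsion element in $\Htwo(\Gamma_\ell, \Z)$ if and only if $c$ is linearly dependent over $\Z$ on the antisymmetric form $\abs{\sqrtD}^{-1}\Im\hlfempty$. By the reformulation in equation~\eqref{eq:lindep_cond}, this is equivalent to the existence of a rational number $Q$ such that
\[
c(t,t') - Q\,\frac{\Im\hlf{t}{t'}}{\abs{\sqrtD}} = 0 \qquad \text{for all } t,t'\in\Dg.
\]
Substituting the definition of $c$ and dividing through by the common factor $\abs{\sqrtD}$, this condition unwinds to
\[
\sum_{\lambda} a(\lambda)\, F_\lambda(t,t') - Q'\, \hlf{t'}{t} = 0
\]
for some rational constant $Q'$, once one uses that $\Im F_\lambda$ pairs against $\Im\hlfempty$ in the right way. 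The remaining work is to pin down $Q'$.

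\medskip

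\textbf{Determining the constant $Q'$ by taking a trace.} This is the step I expect to be the main obstacle, and it mirrors the trace argument used in the Remark following Lemma~\ref{lemma:torsCrit} for a single hermitian form. The idea is that the only way the sum $\sum_\lambda a(\lambda) F_\lambda$ can be a scalar multiple of $\hlfempty$ is governed, up to the antisymmetric/bilinear decomposition noted after Proposition~\ref{prop:ChernClassHl}, by the hermitian parts $H_\lambda$; and by that same Remark, the proportionality factor for a hermitian form $H$ is $Q = -\tfrac{1}{n}\tr H$. For $H_\lambda(a,b) = \hlf{b}{\lambda}\hlf{\lambda}{a}$ one computes $\tr H_\lambda = \hlf{\lambda}{\lambda}$ (summing $\hlf{e_i}{\lambda}\hlf{\lambda}{e_i}$ over a normalized orthogonal basis reconstructs $\hlf{\lambda}{\lambda}$, up to the sign coming from the definite signature of $W$). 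Thus $Q' = -\tfrac{1}{n}\sum_\lambda a(\lambda)\hlf{\lambda}{\lambda}$, and plugging this back gives exactly
\[
\sum_{\lambda} a(\lambda)\left[ F_\lambda(t,t') - \frac{\hlf{\lambda}{\lambda}}{n}\hlf{t'}{t}\right] = 0,
\]
the asserted equation. The delicate bookkeeping lies in tracking the signs (the signature convention $\tr\hlfempty|_{W_\C} = -n$, the ordering of arguments in $\hlf{t'}{t}$ versus $\hlf{t}{t'}$, and the fact that the complex bilinear part $B_\lambda$ and antisymmetric pieces drop out of the real condition), and in justifying that the single real linear-dependence equation \eqref{eq:lindep_cond} is equivalent to the displayed hermitian-type equation via the same real-part/imaginary-part splitting used in the Remark. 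Once those signs are checked, the equivalence is immediate in both directions, since every step above is reversible.
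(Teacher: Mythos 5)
Your proposal follows the paper's proof essentially verbatim: identify $\delta(\HeegU)$ with the integer combination of the bilinear forms from Proposition~\ref{prop:ChernClassHl}, invoke the linear-dependence criterion \eqref{eq:lindep_cond} from Lemma~\ref{lemma:torsCrit}, pass to the complexified identity by replacing $t'$ with an imaginary multiple, and pin down $Q$ by a trace (where the paper makes your ``$B_\lambda$ drops out'' claim precise by comparing traces over $\{e_l\}$ and $\{ie_l\}$). The only blemish is the self-acknowledged sign slip ($\tr H_\lambda = -\hlf{\lambda}{\lambda}$ with the convention $\hlf{e_l}{e_m}=-\delta_{l,m}$, not $+\hlf{\lambda}{\lambda}$), which does not affect the route.
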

From the proof of this lemma we will also get the following necessary condition (where we use the same notation as in the Lemma):
\begin{corollary}\label{cor:nec_cond}
If $\delta(\HeegU)$ is a torsion element, then for the bilinear form $B_\lambda(a,b) = \hlf{a}{\lambda}\hlf{b}{\lambda}$ we have
\begin{equation}\label{eq:lemma_nec_cond}
\sum_{\substack{\lambda \in L'\cap \ell^\perp \\ \QfNop(\lambda) < 0}}
a(\lambda) \tr B_\lambda = 0, %
\end{equation}
where the trace  is taken over a normal orthogonal basis with respect to $\hlfempty$. 
\end{corollary}
\begin{proof}
The Chern class $\delta(\HeegU)$ is given by a linear combination 
 of cocycles $[c_\lambda]$ in $\Htwo(\Gamma_\ell, \Z)$.
 By  Proposition \ref{prop:ChernClassHl}, each $[c_\lambda]$ is represented 
 by the two-cocycle 
\[
(t,t') \mapsto -\Im\left[ \abs{\sqrtD} F_\lambda(t,t')\right]. 
\]
Through  \eqref{eq:Pic_H1} and the exactness of the sequence in \eqref{eq:seqH2ZtoOps}, the image of $[c_\lambda]$ in $\KH^2(\Gamma_\ell, \Oeps)$ vanishes.
By the results of section \ref{sec:cohom}, $\delta(\HeegU)$ is a torsion element in $\Htwo(\Gamma_\ell, \Z)$ if and only if there is a rational number $Q$ such that the equation 
\[
 \sum_{\substack{\lambda \in L'\cap \ell^\perp \\ \QfNop(\lambda) < 0}} a(\lambda)\abs{\sqrtD} \cdot\Im F_\lambda(t,t') 
 = Q \frac{\Im\hlf{t'}{t} }{\abs{\sqrtD}} 
\]
holds for all $t, t' \in \Dg$.  
Since $\Dg$ has full rank in $W_\fieldk$, by extension of scalars, the equation holds for all pairs of vectors in $W_\fieldk$. 
Both sides of the equation are linear in $t'$. Thus replacing $t'$ with a purely imaginary multiple gives a second, equivalent equation:
\[
\sum_{\substack{\lambda \in L'\cap \ell^\perp \\ \QfNop(\lambda) < 0}} a(\lambda)\abs{\sqrtD} \cdot \Re F_\lambda(t,t')
 = Q \frac{\Re\hlf{t'}{t} }{\abs{\sqrtD}}. 
 \]
By linear combination of the two equations, we get
\begin{equation}\label{eq:Q_cmplxcond} 
 \sum_{\substack{\lambda \in L'\cap \ell^\perp \\ \QfNop(\lambda) < 0}} a(\lambda)\abs{\sqrtD} 
 F_\lambda(t,t')  = Q  \frac{\hlf{t'}{t} }{\abs\sqrtD}. 
\end{equation}
To determine $Q$,  we take the trace of both sides of \eqref{eq:Q_cmplxcond}, using an orthogonal basis of $W_\C$ with respect to $\hlfempty$, say $\{e_l\}_{l=1,\dotsc, n}$ with $\hlf{e_l}{e_m} = -\delta_{l,m}$.
Now, $\tr \hlfempty = -n$ and the trace of $H_\lambda$ is $- \hlf{\lambda}{\lambda}$,
hence 
\begin{equation}\label{eq:factorQ} 
Q(-n) = \sum_{\substack{\lambda \in L'\cap \ell^\perp \\ \QfNop(\lambda) < 0}} a(\lambda) \abs{\Disc}
\left( - \hlf{\lambda}{\lambda} + \tr_{\{e_l\}} B_\lambda \right). 
\end{equation}
It turns out the trace of $B_\lambda$ does not contribute to $Q$. 
Indeed, if we take the trace of \eqref{eq:Q_cmplxcond} over an orthogonal basis of $W_\C$ obtained from $\{ e_l\}$ by rescaling with the complex unit $i$, i.e.\ $\{ i e_l\}_{l=1,\dotsc, n}$, 
the traces of the hermitian forms  $\hlfempty$ and $H_\lambda$ remain unchanged while that of $B_\lambda$ switches sign.
Comparing this result with \eqref{eq:factorQ}, we obtain
\[
Q = \sum_{\substack{\lambda \in L'\cap \ell^\perp \\ \QfNop(\lambda) < 0}}
a(\lambda)\cdot Q_\lambda
\quad\text{with}\quad 
Q_\lambda \vcentcolon= \abs{\Disc} \frac{\hlf{\lambda}{\lambda}}{n}.  
\]
Together with \eqref{eq:Q_cmplxcond} the statement follows. 
Further, since the contribution of $B_\lambda$ to the trace vanishes, we get the necessary condition
\begin{equation*}
\sum_{\substack{\lambda \in L'\cap \ell^\perp \\ \QfNop(\lambda) < 0}}
a(\lambda) \tr B_\lambda = 0. 
\end{equation*}
This proves the corollary, as well.
\end{proof}

\subsection{The Main Result}
We can now turn to the object of our main interest, Heegner divisors of the form $\HeegU_\ell(\beta, m)$.
We want to describe their position in the local Picard group. Recall that by \eqref{eq:HeegUell_bm} the divisors $\HeegU_\ell(\beta, m)$
can be written using divisors of the type $\HeegU_\infty(\lambda)$.  Thus, any finite linear combination $\HeegU$ of Heegner divisors $\HeegU_\ell(\beta, m)$, 
can be written as a locally finite sum of  Heegner divisors $\HeegU_\infty(\lambda)$. Also, note that  for a divisor of this type, the Chern
class  $\delta(\HeegU_\infty(\lambda))$ depends only on the projection $\lambda_D$. 
With this notation, we formulate the following theorem.
\begin{theorem}\label{thm:H_torsCond}
Consider a finite linear combination of local Heegner divisors of the form 
\begin{equation}
\HeegU = \frac12 \sum_{\beta \in \Lcal} \sum_{\substack{m \in \Z + \QfNop(\beta) \\  m <0 }}
c(\beta, m) \HeegU_\ell(\beta, m), 
\end{equation}
with integral coefficients $c(\beta, m)$, satisfying $c(\beta, m) = c(-\beta,m)$.

Then, $\HeegU$ is torsion element in the  Picard group $\Pic\left(\Gamma_\ell  \backslash\Ueps(\ell)\right)$ 
if and only if for all $t, t' \in \Dg$ the following equation holds
\begin{equation}\label{eq:tors_H}
\sum_{\beta \in \Lcal}\; \sum_{\substack{m \in \Z + \QfNop(\beta) \\  m < 0 }} c(\beta, m)  
\sum_{\substack{\lambda \in \Dltc' \\ \lambda + \Dltc \equiv\pi(\beta) \\ 
\QfNop(\lambda) = m}} \left[
F_\lambda(t,t') - \frac{\hlf{\lambda}{\lambda}}{n} \hlf{t'}{t}
\right] = 0.
\end{equation}
Further, a necessary conditions for this to be the case is 
that the following identity holds, with $B_\lambda(x,y) = \hlf{x}{\lambda}\hlf{y}{\lambda}$:
\begin{equation}\label{eq:tr_cond} 
\sum_{\beta \in \Lcal}\; \sum_{\substack{m \in \Z + \QfNop(\beta) \\  m < 0 }} c(\beta, m)  
\sum_{\substack{\lambda \in \Dltc' \\ \lambda + \Dltc \equiv\pi(\beta) \\ 
\QfNop(\lambda) = m}} \tr B_\lambda = 0.
\end{equation}
Here, the trace is taken over an orthogonal basis with respect to $\hlfempty$. 
\end{theorem}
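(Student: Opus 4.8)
The plan is to prove the two implications separately: the forward implication follows formally from the cohomology of the preceding subsections, whereas the converse must be established by an explicit construction with local Borcherds products. For the forward direction, observe that the connecting map $\delta\colon\Pic(\Gamma_\ell\backslash\Ueps(\ell))\to\Htwo(\Gamma_\ell,\Z)$ in \eqref{eq:seqPictoH2} is a homomorphism, so if $\HeegU$ is torsion in the Picard group then its Chern class $\delta(\HeegU)$ is torsion in $\Htwo(\Gamma_\ell,\Z)$. Using \eqref{eq:HeegUell_bm} I would rewrite $\HeegU$ as a finite combination $\sum_\lambda a(\lambda)\HeegU_\infty(\lambda)$ with $\lambda\in L'\cap\ell^\perp$ (only vectors perpendicular to $\ell$ meet $\Ueps(\ell)$ for small $\epsilon$) and then apply Lemma \ref{lemma:tors_lcH}. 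Since both $\delta(\HeegU_\infty(\lambda))$ and $F_\lambda$ depend only on the definite part $\lambda_D$, the surjection $\pi\colon\Lcal\to\Dltc'/\Dltc$ lets me recast the resulting identity as the sum over $\lambda\in\Dltc'$ with $\lambda+\Dltc\equiv\pi(\beta)$ appearing in \eqref{eq:tors_H}; the necessary trace identity \eqref{eq:tr_cond} comes out of Corollary \ref{cor:nec_cond} in exactly the same way.

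The converse is the real content, and Lemma \ref{lemma:tors_lcH} alone does not suffice: it only yields that \eqref{eq:tors_H} makes $\delta(\HeegU)$ torsion in $\Htwo(\Gamma_\ell,\Z)$, whereas the kernel of $\delta$ in \eqref{eq:seqPictoH2} (the image of $\Hom(\Dg,\Peps^*)$) need not be a torsion group, so a priori $\HeegU$ could have torsion Chern class without being torsion in $\Pic$. I would therefore argue constructively. Form the product of local Borcherds products $\Phi=\prod_\lambda\Psi_\lambda^{a(\lambda)}$, symmetrized over $\lambda$ and $-\lambda$ as the hypothesis $c(\beta,m)=c(-\beta,m)$ permits; by Definition \ref{def:localBp} it is holomorphic with divisor a multiple of $\HeegU$, and by Proposition \ref{prop:J_lambda} its automorphy factor under $\Gamma_\ell$ is $J=\prod_\lambda J_\lambda^{a(\lambda)}$. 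Extending \eqref{eq:tors_H} to all of $W_\C$ as in \eqref{eq:Q_cmplxcond}, the hypothesis says exactly that $\sum_\lambda a(\lambda)F_\lambda$ is a rational multiple of $\hlfempty$; in terms of the decomposition $F_\lambda=B_\lambda+H_\lambda$ this means the complex-bilinear part $\sum_\lambda a(\lambda)B_\lambda$ vanishes identically, while the hermitian part $\sum_\lambda a(\lambda)H_\lambda$ is proportional to $\hlfempty$.

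From this I would read off that $J$ is trivial up to a coboundary and torsion. Writing $\hlf{z}{\lambda}=\hlf{\ell'}{\lambda}+\hlf{\sigma}{\lambda_D}$ (using $\lambda\in\ell^\perp$), the $\sigma$-linear part of the exponent of $J$ separates into a complex-bilinear piece controlled by $\sum_\lambda a(\lambda)B_\lambda$ and a hermitian piece controlled by $\sum_\lambda a(\lambda)H_\lambda$. The first vanishes by the hypothesis; the second, being proportional to $\hlf{\sigma}{t}$, agrees with a power of the $\sigma$-linear part of the automorphy factor of the unit $q_\ell=e(\Ng^{-1}\tau)\in\Oeps^*$ (cf.\ Remark \ref{rmk:toroidal_Pic}), and so may be cancelled by multiplying $\Phi$ by a power of $q_\ell$ without changing its class in $\Pic$. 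The symmetrization removes the terms linear in $t$ that are odd under $\lambda\mapsto-\lambda$, and the remaining constant-in-$z$ factor, built from the $\zeta$- and half-integer-valued terms of Proposition \ref{prop:J_lambda}, should define a character $\Gamma_\ell\to\C^*$ with values in roots of unity. Passing to a power $N$ that clears the denominator of the proportionality constant, $N\Phi$ times a suitable power of $q_\ell$ descends to a meromorphic function on $\Gamma_\ell\backslash\Ueps(\ell)$ with divisor $N\HeegU$, whence $N\HeegU$ is principal and $\HeegU$ is torsion in $\Pic(\Gamma_\ell\backslash\Ueps(\ell))$.

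The main obstacle is precisely this last step, since the torsion of the Chern class is automatic: everything reduces to showing that the genuinely $z$-dependent part of $J$ becomes a coboundary in $\Oeps^*$ once \eqref{eq:tors_H} holds, and that the leftover constant factor is a root of unity. I expect two places to require care: the identification of the hermitian part of $J$ with the coboundary coming from $q_\ell$, which is exactly why proportionality to $\hlfempty$ is the right condition, and the bookkeeping of the constant terms, where the symmetry $c(\beta,m)=c(-\beta,m)$ is what prevents a non-torsion character from surviving.
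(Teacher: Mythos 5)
Your overall strategy coincides with the paper's: the forward implication and the trace condition are obtained exactly as you describe from Lemma \ref{lemma:tors_lcH} and Corollary \ref{cor:nec_cond}, and the converse is proved constructively by showing that the automorphy factor $J_{\HeegU}=\prod_\lambda J_\lambda^{a(\lambda)}$ of the product of local Borcherds products differs from the automorphy factor of an invertible function only by factors of finite order. Your key observation --- that \eqref{eq:tors_H} forces $\sum_\lambda a(\lambda)B_\lambda=0$ and $\sum_\lambda a(\lambda)H_\lambda$ proportional to $\hlfempty$, which is what lets the $\sigma$-linear part of the exponent be matched against the automorphy factor of a unit of the form $e(c\tau)$ --- is the heart of the paper's argument as well.

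However, your treatment of what is left over after this cancellation has a genuine gap. You assert that the remaining constant-in-$z$ factor is ``built from the $\zeta$- and half-integer-valued terms'' and defines a character valued in roots of unity. This fails on two counts. First, the term $-2\zeta\left(\Re\hlf{t}{\lambda}\right)^2$ is quadratic in $t$, so it is not a character, and since $\zeta\notin\R$ it is not unimodular on its own; it can only be disposed of by combining it with the $\hlf{t}{t}$-part of the automorphy factor of $e(c\tau)$ (which your matching of ``$\sigma$-linear parts'' discards) and applying the hypothesis a second time, to $\Re F_\lambda(t,t)$ on the diagonal --- only then does the residue $e\left(-2\Re\zeta\cdot\Qf{\lambda}n^{-1}\Qf{t}\right)$ have finite order. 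Second, you have overlooked the contribution of the $\ell'$-component of $z$ paired with the $\ell$-component $\dot\beta_\ell$ of $\dot\beta$, namely $e\left(-2\sqrtD\overline{\dot\beta_\ell}\Re\hlf{t}{\kappa+\dot\beta_D}\right)$: this is constant in $z$ and linear in $t$, but it is quadratic in $\lambda$ and hence survives your symmetrization, and since $\sqrtD\Re\dot\beta_\ell$ is purely imaginary it is not root-of-unity valued (not even of absolute value one). The paper trivializes it by multiplying with a second family of units $g(z)=e\left(\hlf{\sigma}{\mu}\right)$ with $\mu=\kappa+\dot\beta_D$, whose automorphy factor $e\left(\sqrtD^{-1}\hlf{t}{\mu}\right)$ cancels the non-rational part, after which only rational exponents, hence torsion elements, remain. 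Without these two devices your ``leftover constant factor'' is neither a character nor of finite order, and the conclusion does not follow as stated.
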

We note that  by \eqref{eq:torEpstorLoc} a linear combination of Heegner divisors $\HeegU$ is a torsion element in $\Pic(\Gamma_\ell\backslash U_\epsilon(\ell))$ if and only if it is a torsion element in the local Picard group $\Pic( X_\Gamma, \ell)$.
\begin{proof}
\emph{If $H$ is a torsion element}, the equation \eqref{eq:tors_H} follows from Lemma \ref{lemma:tors_lcH}. 
Also, from the proof of that Lemma and Corollary \ref{cor:nec_cond}, it is clear that in this case, the identity 
\eqref{eq:tr_cond} holds.  

\emph{For the converse,} assume that \eqref{eq:tors_H} 
holds for all $t, t' \in \Dg$. We will show that $\HeegU$ is a torsion element in the Picard group.
By extension of scalars, the equation remains valid for all $t,t' \in W_\C$. 
Using \eqref{eq:thecocyclJ}, an automorphy factor describing $\HeegU$ in 
$\Pic\left(\Gamma_\ell\backslash\Ueps(\ell)\right)$ is given by the following (finite) product (for $g=[h,t] 
\in \Gamma_\ell$, $z\in\Ueps = \Ueps(\ell)$):
\begin{multline}\label{eq:JH_factors}
J_{\HeegU}(g, z)   = \prod_{\substack{\beta \in \Lcal \\ m \in \Z + \QfNop(\beta) \\ m<0}}
\prod_{\substack{\kappa \in \Dltc\\  \QfNop(\kappa + \dot\beta) = m }} 
  J_{\kappa + \dot\beta}(g, z)^{\frac12 c(\beta,m)} \\  
= \prod_{\beta, m}\prod_{\kappa}
e\Bigl(
-2\abs{\Disc}\hlfa[big]{z , \kappa + \dot\beta}\Re\hlfa[big]{t,\kappa + \dot\beta_\Dltc}  \\
- 2\zeta\left(\Re\hlfa[big]{t, \kappa + \dot\beta_\Dltc}\right)^2
+ \Re\hlfa[big]{t, \kappa + \dot\beta_\Dltc}\left( \zeta + 1\right) \Bigr)^{\frac{ c(\beta,m)}{2}}.
\end{multline}
Since $c(\beta,m) = c(-\beta,m)$, terms which are linear in the $\kappa + 
\dot\beta_\Dltc$ cancel. The remaining factors are of the form
  \[
   e\left(-2\abs{\Disc}\hlfa[big]{z , \kappa + \dot\beta}\Re\hlfa[big]{t,\kappa + \dot\beta_\Dltc}
   - 2\zeta\bigl(\Re\hlfa[big]{t, \kappa + \dot\beta_\Dltc}\bigr)^2
   \right)^{\frac{c(\beta,m)}{2}}. 
 \]
Now, we write 
 $\bigl\langle {z, \kappa + \dot\beta} \bigr\rangle = \bigl\langle {z, \kappa + 
\dot\beta_\Dltc}\bigr\rangle +    \bigl\langle {z , \dot\beta - \dot\beta_\Dltc}\bigr\rangle$. 
 Since $\hlfa{\dot\beta, \ell} = 0$, the second part depends only on the 
constant $\ell'$-component of $z$. 
We get
\[
 \left[ e\left( - 2\sqrtD\overline{\dot\beta_\ell} \Re\hlfa[big]{t, \lambda_D}\right)
e\left( -2 \abs{\Disc}\hlfa[big]{\sigma, \kappa + \dot\beta_\Dltc} \Re\hlfa[big]{t, \kappa + \dot\beta_\Dltc} 
-2 \zeta\bigl( \Re\hlfa[big]{t, \kappa + \dot\beta_\Dltc}\bigr)^2
\right)\right]^{\frac{c(\beta,m)}{2}}.
\]
We ignore the first factor for the time being and examine the second factor.  There, the first term in the exponential is 
$-2\abs\Disc F_{\kappa + \dot\beta_D}(t,\sigma)$ while the second term is equal to
$-2\zeta \Re F_{\kappa + \dot\beta_D}(t,t)$.
We apply \eqref{eq:tors_H} to both terms, and can rewrite this factor in the form
\begin{align}
e\left( 2\Disc\vphantom{\frac{\dot\beta_D}{n}} \right. & \left.\frac{\hlfa[big]{\kappa + \dot\beta_D, \kappa + \dot\beta_D}}{n}\Bigl[
\hlf{\sigma}{t} - \frac{\zeta}{\Disc}\hlf{t}{t} \Bigr] \right) \nonumber\\ \label{eq:somefactors}
&\qquad = e\left( 2\Disc\frac{\QfNop\bigl(\kappa + \dot\beta_D\bigr)}{n}\Bigl[
\hlf{\sigma}{t} - \frac{1}{2\sqrtD}\Qf{t} \Bigr] \right) e\left(  - 2\Re\zeta\cdot \frac{\Qf{\lambda}}{n} \Qf{t} \right). 
 \end{align}
Clearly, the last factor in \eqref{eq:somefactors} has finite order and is a torsion element in $\Pic(\Gamma_\ell\backslash\Ueps)$.
Now, we claim that the first factor is actually a trivial automorphy factor.  To see this, consider the invertible function 
$f(z) = e(c\tau)$ with $c\in\Q^\times$; under the operation of $\Gamma_{\ell, T}$, it gives rise to  the following trivial automorphy factor
\[
j_1([0,t], z) = \frac{f([h,t]z)}{f(z)} = e\left(c\left( - \hlf{\sigma}{t} + 
\frac{1}{2\sqrtD}\hlf{t}{t}\right)\right).
\]
Hence the first factor in \eqref{eq:somefactors} is indeed  trivial. 
Now, we return to the previously excluded factor 
\begin{equation}\label{eq:furtherfactors} 
e\left( - 2\sqrtD\overline{\dot\beta_\ell} \Re\hlfa[big]{t, \kappa + \dot\beta_D}\right) 
 =  e\left( - \left( \abs{\sqrtD} \Im\dot\beta_\ell +  \sqrtD\Re\dot\beta_\ell\right) 2\Re\hlfa[big]{t, \kappa + \dot\beta_D} \right). 
\end{equation}
Since $\abs{\sqrtD} \Im\dot\beta_\ell$  is rational (actually, half-integer), this term contributes only a torsion element in the Picard-group. 
Consider the invertible function  $g(z) = e\left(\hlf{\sigma}{\mu}\right)$  with $\mu \in W_\fieldk$ from which we get the trivial automorphy factor
\begin{equation*}
j_2(z,[0,t]) = e\left( \sqrtD^{-1} \hlf{t}{\mu}\right).
\end{equation*}
Setting $\mu = \kappa + \dot\beta_D$, we multiply \eqref{eq:furtherfactors} with a suitable power of $j_2$ 
to kill the term in $\Re\hlfa[big]{t, \kappa + \dot\beta_D}$. Then, only torsion elements remain, as
$\abs{\sqrtD} \Im\hlfa[big]{t, \kappa + \dot\beta_D}$ and $\Re\dot\beta_\ell$ are rational numbers.  

Thus,  we find that each of the finitely may factors of $J_\HeegU$ from \eqref{eq:JH_factors} 
can be expressed through suitable powers of  trivial automorphy factors of the types $j_1$ and $j_2$ and factors of finite order. 
Hence, it follows that $\HeegU$ is a torsion element in 
$\Pic\left(\Gamma_\ell\backslash \Ueps\right)$.
\end{proof}

\begin{rmk}
As in Remark  \ref{rmk:toroidal_Pic}, if one looks at the neighborhoods $\widetilde{V_\epsilon}(\ell)$ from section \ref{subsec:cmpct} rather than $U_\epsilon(\ell)$, 
in the proof,  the function $f$ is no longer invertible and the automorphy factor $j_1$ becomes non-trivial, 
since  $f$ vanishes on the disk center $\{ q_\ell = 0 \}$. As mentioned before, the Chern class is given by $(t,t') \mapsto (\Ng\abs{\sqrtD})^{-1}\Im\hlf{t}{t'}$.  

Thus, in the Theorem one would have to replace ''torsion element in $\Pic(\Gamma_\ell \backslash \Ueps(\ell))$`` by ''equivalent to the divisor of  $\{ q_\ell = 0 \}$ in 
$\Pic(\Gamma_\ell/\Gamma_{\ell, T}\backslash \widetilde{V_\epsilon}(\ell)\bigr)$ (up to torsion)``.  
This kind of statement also carries over to the direct limit and describes the position (up to torsion) of $\HeegU$ in
$\varinjlim \Pic(\widetilde{V_\epsilon}(\ell))$, which in analogy to \eqref{eq:defPicLoc} may be considered as a local Picard group for the cusp $[\ell]$ on $X_{\Gamma, tor}^*$. 
\end{rmk}

\begin{rmk} \label{rmk:BrFr_thm}
Recall how the rational space  $V_\Q$ underlying $V_\fieldk$ has the structure of a quadratic space of signature $(2,2n+2)$. 
Let $\Og(V)$ be the orthogonal group of  $V_\Q$ and $\Og(V)(\R)$ its set of real points. 
In \cite{BrFr}, Bruinier and Freitag study local Heegner divisors at generic boundary components of the symmetric domain 
for such indefinite orthogonal groups.  The local Heegner divisors we consider here can be described as the restriction of their local Heegner 
divisors:

For $\lambda \in \Dltc'$, $\HeegU_\infty(\lambda)$ is 
the restriction of a local Heegner divisor attached to $\lambda$ and, similarly, $\HeegU_\ell(\beta,m)$ is 
the restriction of a composite local Heegner divisor, in the local Picard group for a generic boundary component of the symmetric domain, 
defined by $\fieldk\ell$ as a two-dimensional isotropic subspace over $\Q$. 
This follows from the embedding theory developed by the author in \cite{Hof11, Hof14}. 

The relationship between Theorem \ref{thm:H_torsCond}  and the results in \cite{BrFr} is the following:
By  taking the real part of both sides of \eqref{eq:tors_H}, one gets precisely the torsion 
condition from \citep[Theorem 4.5]{BrFr}. It follows that, under these assumptions,
if a local Heegner divisor  $\HeegU$ as in Theorem \ref{thm:H_torsCond} is a torsion element in $\Pic\left(\Gamma_\ell\backslash\Ueps(\ell)\right)$,  there is a pre-image under restriction which satisfies the torsion criterion in \cite{BrFr} and hence is  a torsion element in the local Picard group for a generic boundary component of the  orthogonal modular variety.  
Conversely, for every local Heegner divisor there which restricts to $\HeegU$, the criterion of \cite{BrFr} implies that  that \eqref{eq:tors_H} holds for $\HeegU$.
\end{rmk}

\section{Application to modular forms}\label{subsec:mf_lgkz}
In this section as an application of Theorem \ref{thm:H_torsCond} we derive a 
statement describing obstructions to local  Borcherds products through certain vector valued cusp forms.  
Our results are closely related to those obtained by Bruinier and Freitag in the context of orthogonal groups \citep[see][Section 5]{BrFr}.

Let us briefly recall some standard facts about the Weil representation and definition of vector valued modular forms. 
The rational space $W_\Q$  underlying $W_\fieldk$, equipped with the quadratic form $\Qf{\cdot}$,  is negative definite with dimension $2n$, 
and the definite lattice $D$ it contains has even $\Z$-rank $2n$. 
Hence,  the Weil representation of the metaplectic group $\Mp_2(\Z)$,
defined as the pre-image of $\SL_2(\Z)$ under the double covering map $\Mp_2(\R)  \twoheadrightarrow  \SL_2(\R)$, factors over $\SL_2(\Z)$.

Thus, there is a unitary representation of $\SL_2(\Z)$ on the group algebra $\C[D'/D]$, denoted $\rho_D$. 
The dual representation to $\rho_D$ is denoted by $\rho_D^*$. 

The Weil-representation $\rho_D$ is defined through the action of the generators of $\SL_2(\Z)$,
$T = \left(\begin{smallmatrix} 1 & 1 \\ 0 & 1\end{smallmatrix} \right)$ 
and $S =\left(  \begin{smallmatrix} 1 & 1 \\ 0 & 1\end{smallmatrix} \right)$. 
Note that $\rho_D^*$ can be obtained from $\rho_D$ by complex conjugation of the matrix coefficients. Thus, we have
(cf.\ \cite{Shin75}):
\begin{equation*}
\begin{split}
\rho_D^*(T)\,\ebase_\gamma & = e\bigl(-\Qf{\gamma} \bigr)\ebase_\gamma, \\
\rho_D^*(S)\,\ebase_\gamma & = \frac{\sqrt{i}^{-2n}}{\sqrt{\abs{\Dltc'/\Dltc}}} 
\sum_{\delta \in \Dltc'/\Dltc}  e\bigl( \blf{\gamma}{\delta} \bigr) \ebase_\delta,
\end{split}
\end{equation*}
where $(\ebase_\gamma)_{\gamma\in \Dltc'/\Dltc}$ is the standard basis for the group algebra $\C[\Dltc'/\Dltc]$, 
and $\blfempty$  is the bilinear form on $V_\Q$ given by $\blfempty\vcentcolon= \tr_{\fieldk/\Q}\hlfempty$. 

\begin{definition}
For $k\in\Z$, a function $f:\, \Hp \rightarrow \C[\Dltc'/\Dltc]$ is called a vector valued modular form of weight $k$ with respect to 
$\rho_D^*$ if 
\begin{enumerate}
 \item $f( A\tau) = \left(c\tau + d\right)^{k} \rho_D^*(A) f(\tau)$ for all $A = \left( \begin{smallmatrix} a & b \\ c & d \end{smallmatrix}\right)\in\SL_2(\Z)$. 
\item $f$ is holomorphic on $\Hp$,
\item $f$ is holomorphic at the cusp $i\infty$. 
\end{enumerate}
\end{definition}
Here $\SL_2(\Z)$ acts on $\Hp$ as usual.
Thus, the first condition implies the existence of a Fourier expansion:
\[
f(\tau) = \sum_{\gamma \in \Dltc'/\Dltc} \sum_{m\in \Z - \QfNop(\gamma)}
a(\gamma, m) e(m\tau) \ebase_\gamma. 
\]   
The second condition means that all coefficients with $m<0$ vanish.
If  $a(\gamma, m)=0$ for all $m\leq 0$, then $f$ is called a \emph{cusp form}. 
We denote the space of cusp forms of weight $k$ transforming under $\rho_D^*$ by $\Cuspf_k(\rho_D^*)$.

In the following, set $k = n +2$. We will define certain $\C[\Dltc'/\Dltc]$-valued cusp forms in $\Cuspf_k(\rho_D^*)$ 
using theta-series with harmonic polynomials as coefficients:
\begin{equation}\label{eq:type_theta}
\Theta_p(\tau, v) = \sum_{\lambda \in \Dltc'} p(\lambda, v) e( - \Qf{\lambda}\tau)  
\ebase_\lambda,
\end{equation}
for fixed $v \in W_\C$. If $p(\lambda, v)$ is harmonic in $\lambda$ and homogeneous of degree two, the theta series is a cusp form in $\Cuspf_k(\rho_D^*)$. 
This is a well-known result in theory of theta functions which can be proved through Poisson-summation, see e.g.\ \citep[Theorem 4.1]{Bo98}. 

The polynomials in question are obtained from the torsion condition in our main result,  Theorem \ref{thm:H_torsCond}. This will allow us to identify
a space of cusp forms as the set of obstructions against the local Heegner $\HeegU$ being torsion. 

We rewrite \eqref{eq:tors_H} using polynomials $p_1(u,v,w), p_2(u,v,w)\in \R[u,v,w]$ defined as follows:
\begin{gather}\label{eq:tors_Hpoly}
\sum_{\beta \in \Lcal}\; \sum_{\substack{m \in \Z + \QfNop(\beta) \\  m < 0 }} c(\beta, m)  
\sum_{\substack{\lambda \in \Dltc' \\ \lambda + \Dltc \equiv\pi(\beta) \\ 
\QfNop(\lambda) = m}} \left[ p_1(\lambda, t, t') + i p_2(\lambda, t, t') \right] = 0, \\
\nonumber
\begin{aligned} 
\qquad \text{with}\quad &
p_1(u,v,w) \vcentcolon  = & \Re F_u(v,w) - \frac{\Qf{u}}{n} \Re\hlf{v}{w},  \\ 
 & p_2(u,v,w) \vcentcolon  = & \Im F_u(v,w) - \frac{\Qf{u}}{n} \Im\hlf{v}{w}.
\end{aligned}
\end{gather}
We note that for the real part of \eqref{eq:tors_H} to hold, by linearity, it suffices to verify for $t=t'$. Consequently, we set 
\[
P(u,v) \vcentcolon = p_1(u,v,v) = 2\left(\Re\hlf{u}{v}\right)^2 - \frac{\Qf{u}}{n} \Qf{v}. 
\]
It is easily seen that both $p_1$ and $p_2$ can be obtained from $P$ using the polarization identity, for example
\[
p_2(u,v,w) = \frac12\left( P(u,v) + P(u, -iw) - P(u, v - iw) \right). 
\]
We also note that these polynomials are all harmonic and homogeneous in $u$. In fact, $P$ is harmonic in both indeterminates $u$ and $v$ and also homogeneous of the correct degree.
Thus, in particular, for every $v\in W_\C$, the theta series $\Theta_P(\tau, v)$ is a cusp form transforming under $\rho_D^*$ with the desired weight $k$. We rewrite \eqref{eq:type_theta} slightly to obtain the Fourier expansion of $\Theta_P(\tau, v)$:
\[
\Theta_P(\tau, v) = \sum_{\gamma \in \Dltc'/\Dltc} \sum_{\substack{ m\in \Z - 
\Qf{\gamma} \\ m<0}} \biggl(
\sum_{\substack{\lambda \in \Dltc' \\ \lambda + \Dltc 
\equiv\gamma \\ \QfNop(\lambda) = m}} P(\lambda,v) \biggr)
\cdot e( -m\tau) \ebase_\gamma. 
\]
Now, the Fourier coefficients are precisely the real part of the inner sums in \eqref{eq:tors_Hpoly}, 
restricted to the diagonal with $t=t'=v$. 

As $v$ varies over $W_\C$, these theta-series $\Theta_P(\tau,v)$ span a subspace of $\Cuspf_k(\rho_D^*)$ which we denote as $\Cuspf_k^\Theta(\rho_D^*)$. 
We remark that the polynomials $p_1(\lambda, v,w)$ and $p_2(\lambda; v, w)$ also define theta series, but these are already contained in $\Cuspf_k^\Theta(\rho_D^*)$. 

With these considerations, Theorem \ref{thm:H_torsCond} can be restated using modular forms. 
\begin{theorem}\label{thm:Obst} 
A  finite linear combination of Heegner divisors 
\[
\HeegU = \frac12 
\sum_{\beta \in \Lcal}\sum_{\substack{m \in \Z + \QfNop(\beta) \\ m <0 }} 
c(\beta, m) \HeegU_\ell(\beta, m), 
\]
with integer coefficients $c(\beta, m)$ satisfying $c(\beta, m) = c(\beta,-m)$ is a torsion element in the 
 Picard group $\Pic\left(\Gamma_\ell\backslash\Ueps(\ell)\right)$ if and only if
\[
\sum_{\beta \in \Lcal} \sum_{\substack{m \in \Z + \Qf{\beta} \\ m<0}} c(\beta, 
m) a(\pi(\beta), -m) = 0
\]
for every cusp form $f = \sum_{\gamma \in \Dltc'/\Dltc} \sum_{m\in \Z - \QfNop(\gamma)}  a(\gamma, m) e( - m\tau) \ebase_\gamma \in 
\Cuspf_k^\Theta(\rho_D^*)$.
\end{theorem}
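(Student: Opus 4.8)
The plan is to deduce the assertion from the main theorem \ref{thm:H_torsCond} together with the facts about the polynomials $p_1,p_2,P$ and their theta series assembled just above. By Theorem \ref{thm:H_torsCond}, the divisor $\HeegU$ is a torsion element in $\Pic(\Gamma_\ell\backslash\Ueps(\ell))$ exactly when the identity \eqref{eq:tors_H} holds for all $t,t'\in\Dg$; since $\Dg$ has full rank in $W_\fieldk$, this is equivalent to \eqref{eq:tors_H} holding for all $t,t'\in W_\C$. The whole argument then amounts to recasting this complex bilinear identity as a vanishing statement for the Fourier coefficients of the theta series $\Theta_P(\cdot,v)$, and finally using the spanning property defining $\Cuspf_k^\Theta(\rho_D^*)$.

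First I would show that \eqref{eq:tors_H} for all $t,t'\in W_\C$ is equivalent to the single real, diagonal condition
\[
\sum_{\beta\in\Lcal}\;\sum_{\substack{m\in\Z+\QfNop(\beta)\\ m<0}} c(\beta,m)\sum_{\substack{\lambda\in\Dltc'\\ \lambda+\Dltc\equiv\pi(\beta)\\ \QfNop(\lambda)=m}} P(\lambda,v)=0\qquad\text{for all } v\in W_\C,
\]
where $P(u,v)=2(\Re\hlf{u}{v})^2-\tfrac{\Qf{u}}{n}\Qf{v}$. The forward implication is immediate: setting $t=t'=v$ and taking the real part turns the bracket in \eqref{eq:tors_H} into $p_1(\lambda,v,v)=P(\lambda,v)$. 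For the converse I would invoke that both the real part $p_1$ and the imaginary part $p_2$ of the summand in \eqref{eq:tors_Hpoly} are recovered from the quadratic diagonal $P$ by the real and complex polarization formulas recorded above, e.g.\ $p_2(u,v,w)=\tfrac12\bigl(P(u,v)+P(u,-iw)-P(u,v-iw)\bigr)$; because the coefficient sum $\sum_{\beta,m}c(\beta,m)\sum_\lambda(\cdot)$ is linear, the vanishing of all $P$-sums forces the vanishing of both the $p_1$- and the $p_2$-sums, hence of \eqref{eq:tors_H}. This passage from the single real diagonal identity to the full complex bilinear identity is the step I expect to require the most care, although the polarization identities it relies on are already in place.

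Next I would read the diagonal condition off as a statement about Fourier coefficients. For each fixed $v\in W_\C$ the polynomial $\lambda\mapsto P(\lambda,v)$ is harmonic and homogeneous of degree two, so by the theory of theta series with harmonic coefficients the series $\Theta_P(\tau,v)=\sum_{\lambda\in\Dltc'}P(\lambda,v)\,e(-\Qf{\lambda}\tau)\,\ebase_\lambda$ is a cusp form of weight $k=n+2$ transforming under $\rho_D^*$, lying in $\Cuspf_k^\Theta(\rho_D^*)$ by the very definition of that space. Matching its Fourier expansion against the notation $f=\sum_\gamma\sum_m a(\gamma,m)e(-m\tau)\ebase_\gamma$, the coefficient $a(\pi(\beta),-m)$ of $\Theta_P(\cdot,v)$ is precisely the inner sum $\sum_{\lambda\equiv\pi(\beta),\,\QfNop(\lambda)=m}P(\lambda,v)$, a routine index bookkeeping. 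Hence the diagonal condition says exactly that $\sum_\beta\sum_m c(\beta,m)\,a(\pi(\beta),-m)=0$ for the cusp form $f=\Theta_P(\cdot,v)$, for every $v\in W_\C$.

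Finally I would close the argument by the spanning property: by construction $\Cuspf_k^\Theta(\rho_D^*)$ is the subspace of $\Cuspf_k(\rho_D^*)$ generated by the $\Theta_P(\cdot,v)$ as $v$ ranges over $W_\C$ (the theta series attached to $p_1$ and $p_2$ adding nothing new). Since the functional $f\mapsto\sum_\beta\sum_m c(\beta,m)\,a(\pi(\beta),-m)$ is linear in $f$, it vanishes on every generator $\Theta_P(\cdot,v)$ if and only if it vanishes on all of $\Cuspf_k^\Theta(\rho_D^*)$. Chaining this with the equivalences of the preceding steps yields the stated criterion, and thus realizes the space $\Cuspf_k^\Theta(\rho_D^*)$ as the obstruction space for $\HeegU$ being torsion.
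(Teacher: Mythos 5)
Your proposal is correct and follows essentially the same route as the paper: the paper also deduces the theorem from Theorem \ref{thm:H_torsCond} by reducing \eqref{eq:tors_H} to the real diagonal condition via the polarization identities for $p_1$ and $p_2$, identifying the resulting sums as the Fourier coefficients of the harmonic theta series $\Theta_P(\cdot,v)$, and concluding by linearity of the obstruction functional on the span $\Cuspf_k^\Theta(\rho_D^*)$. The step you flag as delicate (recovering the full complex bilinear identity from the diagonal $P$-condition) is handled in the paper exactly as you propose, using the displayed polarization formula for $p_2$.
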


\subsection{Relationship to global obstruction theory and the work of Bruinier  and Freitag} \label{subsec:localglobal}

Since the statement of Theorem \ref{thm:Obst} holds for all sufficiently small $\epsilon$, passing to the direct limit we get the statement for the local Picard group at the cusp $\ell$.  Now Theorem \ref{thm:Obst} formally resembles a global obstruction statement for unitary groups from \cite{Hof14} in the style of Borcherds \cite{Bo99}. 
It can be states as follows (by \cite[Lemma 5, Theorem 4]{Hof14}): 
\begin{theorem}[From \cite{Hof14}] \label{thm:obstglob}
A Heegner divisor of the form 
\[
\HeegU = \frac12 \sum_{\beta \in L'/L}\sum_{\substack{m \in \Z + \QfNop(\beta) \\ m <0 }}  c(\beta,m) \HeegU(\beta, m)
\]
is the divisor of a Borcherds product if and only if 
\[
\sum_{\beta \in L'/L} \sum_{\substack{m \in \Z + \QfNop({\beta}) \\ m<0}}
c(\beta, m) b(\beta, -m) = 0,
\]
for every cusp form $g\in \Cuspf_k(\rho_L^*)$ with Fourier coefficients $b(\beta, -m)$. 
\end{theorem}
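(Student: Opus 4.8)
The plan is to reduce the geometric assertion—that $\HeegU$ is the divisor of a Borcherds product—to a purely modular question about realizing a prescribed principal part, and then to resolve that question by Serre duality in the style of \citep[Theorem 3.1]{Bo98}. First I would invoke the construction of Borcherds products for unitary groups from \cite{Hof14}: a weakly holomorphic vector-valued form $F \in \Mweak_{-n}(\rho_L)$ with Fourier coefficients $c(\beta, m)$ lifts to a meromorphic modular form $\Psi_F$ on $\DomU$ whose divisor is $\frac12\sum_{\beta}\sum_{m<0} c(\beta, m)\HeegU(\beta, m)$, depending only on the principal part of $F$. Thus $\HeegU$ is the divisor of a Borcherds product \emph{if and only if} there exists $F \in \Mweak_{-n}(\rho_L)$ with principal part $\sum_{\beta}\sum_{m<0} c(\beta, m) e(m\tau)\ebase_\beta$. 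Note that the input weight $\kappa = -n$ and the obstruction weight $k = n+2$ are complementary, $\kappa + k = 2$, which is exactly the constraint Serre duality requires.

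To set up the duality, for such an $F$ and a cusp form $g = \sum_{\gamma, m} b(\gamma, m) e(m\tau)\ebase_\gamma \in \Cuspf_k(\rho_L^*)$ I would form the $\SL_2(\Z)$-invariant contraction $\langle F(\tau), g(\tau)\rangle = \sum_{\gamma \in L'/L} F_\gamma(\tau) g_\gamma(\tau)$. Because $\rho_L$ and $\rho_L^*$ are dual and $\kappa + k = 2$, this is a \emph{scalar} weakly holomorphic modular form of weight $2$, holomorphic on $\Hp$ with a pole only at $i\infty$. Its constant Fourier coefficient is $\sum_{\beta}\sum_{m} c(\beta, m) b(\beta, -m)$; since $F$ is weakly holomorphic and $g$ is cuspidal, only terms with $m<0$ survive, matching the sum in the theorem. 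A weight-$2$ weakly holomorphic form is a meromorphic differential on the modular curve $\SL_2(\Z)\backslash\Hp$ with a single pole at the cusp, so the residue theorem forces this constant term to vanish. This yields the \emph{necessity} direction: if the input $F$ exists, then $\sum_{\beta}\sum_{m<0} c(\beta, m) b(\beta, -m) = 0$ for every $g \in \Cuspf_k(\rho_L^*)$.

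For the converse I would prove that the map sending $F$ to its principal part has image equal to the annihilator of $\Cuspf_k(\rho_L^*)$ under this pairing—the non-degeneracy half of the duality, and the real obstacle of the proof. Concretely one must identify the finite-dimensional space of admissible principal parts, modulo those already realized by weakly holomorphic forms, with the dual of $\Cuspf_k(\rho_L^*)$; this is where Serre duality for the relevant coherent cohomology on the modular curve enters, transported from the scalar case of \cite{Bo98} to vector-valued forms for $\rho_L$ on the $\Z$-lattice $L$ of signature $(2, 2n+2)$. Granting this surjectivity, any principal part pairing to zero against all cusp forms is realized by some $F \in \Mweak_{-n}(\rho_L)$, and feeding that $F$ through the Borcherds lift of the first step produces a Borcherds product with divisor $\HeegU$. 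The necessity direction is an essentially formal consequence of the residue theorem, whereas this sufficiency step—the exactness of the duality pairing in the vector-valued setting—is where the substantive analytic input resides.
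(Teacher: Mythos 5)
Your proposal is correct, but note that this paper never proves Theorem~\ref{thm:obstglob} at all: it is imported verbatim from \cite{Hof14} (via the citation to Lemma~5 and Theorem~4 there), and your argument is precisely a reconstruction of that cited proof --- the unitary Borcherds lift reduces the geometric statement to the existence of some $F \in \Mweak_{-n}(\rho_L)$ with prescribed principal part, and Borcherds' Serre-duality/residue criterion (Theorem~3.1 of \cite{Bo98}) identifies the realizable principal parts with the annihilator of $\Cuspf_{n+2}(\rho_L^*)$ under the natural pairing. So your proof follows essentially the same route as the source the paper relies on, including the split into the formal residue-theorem direction and the substantive surjectivity (Serre duality) direction.
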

Since by results of Bruinier \cite{Br02}, the local obstruction space $\Cuspf_k(\rho_\Dltc^*)$  can be embedded into the global obstruction space $\Cuspf_k(\rho_L^*)$  \citep[see also][Section 5]{BrFr}, the global obstruction equation implies the local one. Thus, if $\HeegU$ is the divisor of a Borcherds product, then the local divisor 
\[
\HeegU_\ell \vcentcolon = \frac12 \sum_{\beta \in \Lcal}\sum_{\substack{m \in \Z + \QfNop(\beta) \\ m <0 }}  c(\beta,m) \HeegU_\ell (\beta, m)
\]
is a  torsion element in the local Picard group. In fact, the same argument applies for every cusp. 

The results of Bruinier and Freitag, \citep[][Theorem 5.1]{BrFr}, in the setting of orthogonal groups are very similar to Theorem \ref{thm:Obst} above. 
The definition of their theta-series is (essentially) the same.  Indeed, if we look at the rational quadratic space $V_\Q$ underlying $V_\fieldk$ 
and the lattices $L$  and $\Dltc$ as quadratic modules in $V_\Q$, the obstruction spaces are the same. 

In this case, through the embedding theory from  \cite{Hof11, Hof14} we can pull back Heegner divisors on the modular variety of the orthogonal group to Heegner divisors for the modular variety of the unitary group. As sketched in Remark \ref{rmk:BrFr_thm} above, this also works locally. 
Thus if $H$ is the Heegner divisor of a Borcherds product for the orthogonal group, then it is \emph{trivial at generic boundary components} in the sense of \citep[][Definition 5.3]{BrFr}, i.e.\ locally torsion, and hence restricts to a torsion element in the local Picard group for every cusp $I$ of the unitary modular variety. Similarly, by pulling back the Borcherds product itself, one gets a Borcherds product for the unitary group with the pull-back of $H$ as its divisor, and through Theorem \ref{thm:obstglob}, again, the corresponding local Heegner divisors are torsion elements.  

We also remark that the obstruction space in Theorem \ref{thm:obstglob} is the same as that from Borcherds' \cite{Bo99} in the orthogonal situation. Hence, if $\HeegU$ is the Heegner divisor of a Borcherds product for the unitary group, one can find a Heegner divisor on the orthogonal side which restricts to $\HeegU$ and is the divisor of a Borcherds product.
\begin{rmk} In \cite[][Theorem 5.4]{BrFr},  they were able to show that for a unimodular lattice $L$, the
triviality of a Heegner divisor at generic boundary components, conversely, implies the global obstruction equation of Borcherds from \cite{Bo99} and hence the existence of a  Borcherds product for a Heegner divisor  that kills all local obstructions. 
Their argument depends on two results: The uniqueness of isomorphism classes of unimodular lattice, and a result of Waldspurger \citep[see][]{Wald78} on the generation of the space $\Cuspf_k(\rho_L^*)$ by theta-series for definite lattices.  
Unfortunately, there is no obvious way to transfer this argument to hermitian lattices, since given a quadratic module over $\Z$ a complex structure need neither exist, nor need it be unique. 
\end{rmk}

\acknowledegements I wish to thank Jan Bruinier for many helpful discussions and for his encouragement in the course of working on this paper.
Further, I would like to acknowledge the role of Prof. Freitag: His insights into group cohomology have helped me considerably to start out with this project. 
Finally, I want to thank the anonymous referee, whose comments have led to enormous improvements in this paper. 

\bibliographystyle{hplain} 
\bibliography{local.bib} 
\bigskip
\noindent \textit{%
Mathematisches Institut \\
Universität Heidelberg \\
Im Neuenheimer Feld 205 \newline {D-69120} Heidelberg\\
Germany} \newline
\href{mailto:hofmann@mathi.uni-heidelberg.de}{\nolinkurl{hofmann@mathi.uni-heidelberg.de}}.
\end{document}